\font\smallit=cmti10
\renewcommand\section{\@startsection {section}{1}{\z@}
{-30pt \@plus -1ex \@minus -.2ex}
{2.3ex \@plus.2ex}
{\normalfont\normalsize\bfseries}}
\renewcommand\subsection{\@startsection{subsection}{2}{\z@}
{-3.25ex\@plus -1ex \@minus -.2ex}
{1.5ex \@plus .2ex}
{\normalfont\normalsize\bfseries}}
\renewcommand{\@seccntformat}[1]{\csname the#1\endcsname. }
\newtheorem{theorem}{Theorem}
\newtheorem{corollary}{Corollary}
\newenvironment{example}[1][Example.]{\begin{trivlist}
\item[\hskip \labelsep {\bfseries #1}]}{\end{trivlist}}
\newenvironment{remark}[1][Remark.]{\begin{trivlist}
\item[\hskip \labelsep {\bfseries #1}]}{\end{trivlist}}
\begin{document}

\begin{center}
\uppercase{\bf Derived Palintiple Families and Their Palinomials}
\vskip 20pt
{\bf Benjamin V. Holt}\\
{\smallit Department of Mathematics, Humboldt State University, Arcata, California}\\
{\tt bvh6@humboldt.edu}\\
\vskip 10pt
\end{center}
\vskip 30pt

%\centerline{\smallit Received: , Revised: , Accepted: , Published: } % We will fill in the dates
\vskip 30pt

\centerline{\bf Abstract}

\noindent
We consider several families of palintiples (also known as reverse multiples) whose carries themselves are digits
of lower-base palintiples and give some methods for constructing them from fundamental palintiple types.
We also continue the study of palinomials introduced in an earlier paper by
revealing a more direct relationship between the digits of certain palintiple types and 
the roots of their palinomials. We explore the consequences of this relationship for palinomials induced
by palintiple families derived from lower-base palintiples.
Finally, we pose some questions regarding Young graphs of derived palintiple families 
and consider the implications our general observations might have for relations between Young graph isomorphism classes.

\pagestyle{myheadings}
%\markright{\smalltt INTEGERS: 16 (2016)\hfill}
\thispagestyle{empty}
\baselineskip=12.875pt
\vskip 30pt

\section{Introduction}
In a previous paper on palintiple numbers \cite{holt} (also known as reverse multiples \cite{kendrick_1,sloane,young_1,young_2}) 
it is noted that ``the carries [of a palintiple]...play as critical a role as the digits themselves.''
Indeed, the full measure of this statement is realized when one notices that the carries 
of a palintiple are often themselves the digits of a palintiple of a lower base. 
Consider the example of the $(10,139)$-palintiple $(28, 25, 108, 113, 2)_{139}$ which has carries given by 
$(c_4, c_3, c_2, c_1,c_0)=(8, 7, 1, 2, 0)$. One immediately notices that the nontrivial carries 
are digits of the well-known $(4,10)$-palintiple 8712. 

The recent work of Sloane \cite{sloane} translates the palintiple problem
into graph-theoretical language by means of \textit{Young graphs}, 
which are a succinct visualization of palintiple structure,
showing how the possible carries generate the possible digits of a palintiple of arbitrary length. 
Young graphs are a modification of tree graphs introduced by Young \cite{young_1,young_2} which are a representation
of an efficient palintiple search method with the possible carries represented as nodes 
and the potential digits being associated with the edges.
The full definition of a Young graph can be found in Definition 5 of \cite{kendrick_1}.

We note that Hoey \cite{hoey_1,hoey_2} presented a similar idea using finite state machines.
Representations of machines which recognize palintiples bear strong resemblance to Young graphs;
the Young graph representing $(5,8)$-palintiples in Figure 6 of Sloane's paper \cite{sloane} 
looks very much like the machine which recognizes $(5,8)$-palintiples \cite{hoey_2}.

Kendrick \cite{kendrick_1} extends Sloane's work \cite{sloane} by proving several of his conjectures. 
Most notably, Kendrick \cite{kendrick_1} proves two of Sloane's main conjectures: Theorem 14 
shows that the $(n,b)$-Young graph, $Y(n,b)$, is isomorphic to the ``1089 graph'' 
if and only if $n+1$ divides $b$, and Theorem 31 characterizes \textit{complete} Young graphs. 
(That is, the nontrivial carry-nodes form a subgraph isomorphic to the complete directed graph on $m$ nodes, $K_m$, 
with some additional details which can be found in Definition 3.3 of Sloane \cite{sloane}. 
We note that complete Young graphs are also denoted by $K_m$.)
Kendrick goes on to list several conjectures of his own regarding other Young graph isomorphisms.  
At this point, we inform the reader that Theorem 14 of Kendrick \cite{kendrick_1} 
concerning 1089 graphs will be used repeatedly throughout this paper. 

We also note that the notation we use above to denote the Young graph is slightly different from that of Sloane and Kendrick,
who use $Y(g,k)$, where $g$ is the base, and $k$ is the multiplier (the order of the base and multiplier is reversed).

Other recent work includes \cite{holt} which establishes some general properties of palintiples 
of any base, having an arbitrary number of digits, using only elementary methods. 
As with the work of \cite{kendrick_1,sloane, young_1, young_2}, 
the methods therein pay particular attention to the carries. Patterns found in the carries naturally partition all 
palintiples into three mutually exclusive and exhaustive classes. 
Letting $p=(d_k, d_{k-1},\ldots, d_0)_b$ be an $(n,b)$-palintiple with carries $c_k$,
$c_{k-1}$,$\ldots$, $c_0$, these classes are defined as follows: we say that $p$ is \textit{symmetric} if
$c_j=c_{k-j}$ for all $0\leq j \leq k$, 
and $p$ is \textit{shifted-symmetric} if $c_j=c_{k-j+1}$ for all $0 \leq j \leq k$.
A palintiple which is neither symmetric nor shifted-symmetric is called \textit {asymmetric}.
The $(4,10)$-palintiple $(8,7,1,2)_{10}$ has carries $(c_3,c_2,c_1, c_0)=(0,3,3,0)$, making it an
example of a symmetric palintiple. The reader may find more examples in Table 1 of \cite{holt}.

Comparing the above mentioned classes to Young graph isomorphism classes, 
Theorem 14 in \cite{kendrick_1} and Theorem 6 in \cite{holt}
demonstrate that any $(n,b)$-palintiple generated from a Young graph, $Y(n,b)$, 
which is isomorphic to $Y(9,10)$, otherwise known
as a 1089 graph \cite{kendrick_1, sloane}, is symmetric.
(1089 is a base-10 reverse multiple whose digits are reversed when multiplied by 9:
 $9801=9\cdot 1089$. In the language of this paper, 9801 is a $(9,10)$-palintiple.)
Whether or not every symmetric palintiple can be generated from a 1089 graph
remains an open question (see the last section for a discussion of this).
Also, Theorem 31 of Kendrick \cite{kendrick_1} and Theorem 9 of \cite{holt}
demonstrate that a palintiple is shifted-symmetric if and only if it is generated by a complete Young graph. 
Shifted-symmetric palintiples are the most well-understood as they are completely determined and characterized
by the above works. They are also in some sense a primordial class; all two-digit palintiples
are shifted-symmetric and are a focus of Sutcliffe's \cite{sutcliffe} seminal paper on the topic.

As for palintiples whose Young graph is neither a 1089 graph (symmetric) nor complete (shifted-symmetric),
the asymmetric class is revealed to consist of an astonishing plurality of Young graph isomorphism 
classes (i.e., palintiple types) which admit many subclassifications and isolated cases 
as demonstrated by Kendrick's isomorphism class data \cite{kendrick_2} for all bases less than 337.
Moreover, this plurality seems to only grow with increasing base as suggested by 
Conjecture 43 of Kendrick \cite{kendrick_1}.
This underscores a primary aim of this paper to begin to more fully understand
palintiples beyond the symmetric and shifted-symmetric classes.
We shall describe several families of asymmetric palintiples which are constructed, or \textit{derived}, from lower-base
examples. In particular, we will outline some methods for constructing new asymmetric palintiples, 
such as the example already given above, using as carries 
the digits of ``old'' palintiples whose Young graph is isomorphic to either a 1089 graph (symmetric palintiple)
or a complete graph (shifted-symmetric palintiple).
It is also worth mentioning that, incidentally, three of the four asymmetric examples given in Table 1 of \cite{holt}, 
namely, the $(14,9)$, $(22,7)$, and $(11,7)$-palintiples given by
$(11, 9, 1, 4, 1)_{14}$, $(16, 13, 3, 8, 2)_{22}$, and $(8, 9, 10, 2, 1)_{11}$, respectively, 
are also examples of palintiples \text{derived} from lower-base palintiples.

Kendrick \cite{kendrick_1} mentions that it is still quite poorly understood 
how the number and graph-theoretical aspects of the palintiple problem relate to one another.
Our work, which relies mostly upon elementary results, 
gives rise to some concrete questions as to how the derived palintiple families described here can be classified according to 
Young graph isomorphisms, as well as suggest how Young graph isomorphism classes might 
be generated from others.

Additionally, this paper further develops the topic of palinomials introduced in \cite{holt},
revealing a more intimate relationship between the digits of $(n,b)$-palintiples
and the roots of their palinomials when $Y(n,b)$ is either 1089 or complete. 
These results have implications for palinomials induced by palintiples 
derived from $(n,b)$-palintiples such that $Y(n,b)$ is a 1089 graph or a complete graph.
   
\section{Palintiples Whose Carries are Digits of Lower-Base Palintiples}

Henceforth, we shall suppose that $p=(d_k,d_{k-1}, \ldots, d_0)_b$ is an $(n,b)$-palintiple with carries 
$c_k, c_{k-1},\ldots,c_0$. It is well-established \cite{holt, sloane, young_1} how the 
digits of a palintiple are related to the carries:
\begin{equation}
d_j=\frac{nbc_{k-j+1}-nc_{k-j}+bc_{j+1}-c_j}{n^2-1}.
\label{fund}
\end{equation}

We pose the general question of when the carries of a palintiple are the digits 
of a palintiple of a lower base as in the example given in the introduction. 
In this paper we shall consider two possibilities for when this occurs.
\\\\
\noindent Case 1: We find conditions under which we can construct a new $(k+2)$-digit 
$(\hat{n},\hat{b})$-palintiple $\hat{p}$ with carries $(\hat{c}_{k+1}, \hat{c}_{k},\ldots,\hat{c}_0)$ 
given by $(d_k,d_{k-1}, \ldots, d_0,0)$ as in the example given in the introduction.
Using Equation \ref{fund}, the new digits $\hat{d}_j$ must satisfy
\begin{equation}
\hat{d}_j=\frac{\hat{n}\hat{b}\hat{c}_{k-j+2}-\hat{n}\hat{c}_{k-j+1}+\hat{b} \hat{c}_{j+1}-\hat{c}_{j}}{\hat{n}^2-1}
=\frac{\hat{n}\hat{b}d_{k-j+1}-\hat{n}d_{k-j}+\hat{b}d_j-d_{j-1}}{\hat{n}^2-1}.
\label{fund_rel}
\end{equation}
Then $\hat{b}d_0 \equiv \hat{n} d_k \mod(\hat{n}^2-1)$ when $j=0$. 
Therefore, in order to find a suitable higher base $\hat{b}$, it must be that
$\gcd(d_0,\hat{n}^2-1)$ divides $d_k$, in which case we have that $\hat{b}=s+\alpha \frac{\hat{n}^2-1}{\gcd(d_0,\hat{n}^2-1)}$,
where $s$ is the least non-negative solution of the above congruence and $\alpha \geq 1$. The above then becomes
\begin{equation}
 \hat{d}_j=\frac{\hat{n}s d_{k-j+1}-\hat{n} d_{k-j}+s d_{j}-d_{j-1}}{\hat{n}^2-1}
 +\alpha \frac{\hat{n} d_{k-j+1}+d_j}{\gcd(d_0,\hat{n}^2-1)}.
 \label{new_digs}
\end{equation}

\noindent Case 2: We now ask when we may construct a new $(k+3)$-digit $(\hat{n},\hat{b})$-palintiple $\hat{p}$ 
with carries $(\hat{c}_{k+2}, \hat{c}_{k+1},\ldots,\hat{c}_0)$ given by $(0,d_{k}, d_{k-1}, \ldots, d_{0},0)$.
For $k+3$ digits we have 
$$
    \hat{d}_j=\frac{\hat{n}\hat{b}\hat{c}_{k-j+3}-\hat{n}\hat{c}_{k-j+2}+\hat{b} \hat{c}_{j+1}-\hat{c}_{j}}{\hat{n}^2-1}
      =\frac{\hat{n}\hat{b}d_{k-j+2}-\hat{n}d_{k-j+1}+\hat{b}d_{j}-d_{j-1}}{\hat{n}^2-1}.
$$
Then $\hat{b}d_0 \equiv 0 \mod(\hat{n}^2-1)$ when $j=0$, so that $\hat{b}=\alpha \frac{\hat{n}^2-1}{\gcd(d_0,\hat{n}^2-1)}$.
It follows that
\begin{equation}
\hat{d}_j=\frac{\alpha}{\gcd(d_0,\hat{n}^2-1)}(\hat{n}d_{k-j+2}+d_{j})-\frac{\hat{n}d_{k-j+1}+d_{j-1}}{\hat{n}^2-1}.
\label{new_digs2}
\end{equation}
 
In order to simplify the exposition, we will say that a palintiple
constructed from a lower-base palintiple is a \textit{derived} palintiple. 
In particular, palintiples derived in the manner described in Case 1 and Case 2 above
will be called \textit{singly-derived} and \textit{doubly-derived} palintiples, respectively.

\begin{remark}
 We note that the above cases may not be the only cases of derived palintiples. 
A computer search for cases of derived palintiples other than singly and doubly-derived
has so far yielded no examples, yet we have not been able to rule out this possibility, and
so we leave it as an open question.
\end{remark}

\section{Palintiples Derived from 1089 Palintiples}

Any $(n,b)$-palintiple for which the Young graph, $Y(n,b)$, is isomorphic to $Y(9,10)$ 
(called a ``1089 graph'' by Sloane \cite{sloane})
shall for the remainder of this article be called a \emph{1089 palintiple}. 
Moreover, the family of palintiples derived from 1089 palintiples
shall, for the purpose of less cumbersome exposition, be called
\textit{Hoey} \footnote{In honor of D. J. Hoey, to whose memory we dedicate this work.}
palintiples.

By Theorem 14 of Kendrick \cite{kendrick_1}, 
we may suppose that $n+1$ divides $b$ with quotient $q$. Furthermore, 
by this same theorem, every node of a 1089 Young graph, $Y(n,b)$, must have the form 
of $[0,0]$, $[n-1,0]$, $[0,n-1]$, or $[n-1,n-1]$,
so that $c_j \equiv 0 \mod(n-1)$ for all $0\leq j \leq k$.
Moreover, by Theorem 6 in \cite{holt}, we may suppose that $c_{k-j}=c_j$ for all $0 \leq j \leq k$
(that is, $p$ is a symmetric palintiple). By the above and Equation \ref{fund}, we have
$d_j=nqr_{k-j+1}+qr_{j+1}-r_j$ for all $0 \leq j \leq k$, where $r_{k-j}=r_j$ equals either 0 or 1
for each $0 \leq j \leq k$.

If $r_1=r_{k-1}\neq 1$, then $d_0=d_k=0$, which violates the assumption of no leading zeros.
If $r_{j-1}=r_{j+1}=1$, but $r_j=0$, then $d_j=b$, which is not a base-$b$ digit.
Similarly,  if $r_{j-1}=r_{j+1}=0$, but $r_j=1$, then $d_j=-1$. 
Thus, $r_k, r_{k-1},\ldots, r_0$ is a palindromic binary sequence such that $r_1=r_{k-1}=1$
and there are no isolated zeros or ones except $r_0=r_k=0$. 

Since $d_0=q$ and $d_k=nq$ by the above computations, we have that
$\gcd(d_0,\hat{n}^2-1)$ divides $d_k$, and that $\hat{b}=n\hat{n}+\alpha \frac{\hat{n}^2-1}{\gcd(nq,\hat{n}^2-1)}$.
Equation \ref{new_digs} then yields
\[
\begin{split}
 \hat{d}_j=&
  nq r_{j-2}+\frac{n^2\hat{n}^2 q -n \hat{n} +\hat{n}-q}{\hat{n}^2-1}r_{j}
 + \frac{n^2 \hat{n} q -n \hat{n}^2 -\hat{n}q+1}{\hat{n}^2-1}r_{j-1}\\
&+\alpha \frac{q r_{j+1} +(\hat{n}nq-1)r_{j}+(nq-\hat{n})r_{j-1}+\hat{n}qr_{j-2}}{\gcd(q,\hat{n}^2-1)}.\\
 \end{split}
\]
To ensure that each term in the above is an integer, we must have that $(n-1)\hat{n} \equiv (n^2-1) q \mod(\hat{n}^2-1)$.
A moment's reflection reveals that $b=q(n+1)$ is the only value for $\hat{n}$ which makes the congruence statement true.
It also ensures that $b$ and $\hat{n}^2-1$ are relatively prime.
Therefore, we have $\hat{b}=nb+\alpha(b^2-1)$ with
\[
 \hat{d}_j =  n^2 q r_{j} + nq r_{j-2} - r_{j-1}+\alpha \left( q r_{j+1}+ (q n b - 1) r_{j}-qr_{j-1} + q b r_{j-2} \right).
\]
Since $d_j<b=\hat{n}$, each $\hat{d}_j$ is less than $\hat{b}$, 
and since there are no singleton ones or zeros in $r_k, r_{k-1},\ldots, r_0$ (except $r_0=r_k)$, 
$\hat{d}_j$ cannot be negative. Thus, each $\hat{d}_j$ is a base-$\hat{b}$ digit.
Additionally, since every $\hat{d}_j$ and $d_j$ satisfy Equation \ref{fund_rel}, 
it follows from a routine calculation that
$(\hat{d}_{k+1},\hat{d}_{k},\ldots,\hat{d}_{0})_{\hat{b}}=b(\hat{d}_{0},\hat{d}_{1},\ldots,\hat{d}_{k+1})_{\hat{b}}$,
where $d_{j-1}$ is the $j$th carry for all $0<j\leq k+1$.
Thus, we have the following:

\begin{theorem}
Suppose $(d_k,d_{k-1}, \ldots, d_0)_b$ is a 1089 $(n,b)$-palintiple.
Then for every $\hat{b}>nb$ such that $\hat{b} \equiv nb \mod (b^2-1)$, 
there exists an asymmetric $(k+2)$-digit $(b,\hat{b})$-palintiple 
with carries $(\hat{c}_{k+1}, \hat{c}_{k},\ldots,\hat{c}_0)$ given by $(d_k,d_{k-1}, \ldots, d_0,0)$.
\label{sd_pals}
\end{theorem}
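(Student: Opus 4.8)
The plan is to verify directly, for the explicit formulas already derived in the text, that (i) the $\hat d_j$ are legitimate base-$\hat b$ digits, (ii) the resulting number is genuinely a $(b,\hat b)$-palintiple whose carries are the prescribed sequence $(d_k,\ldots,d_0,0)$, and (iii) this palintiple is neither symmetric nor shifted-symmetric. Much of the hard work is already done in the preceding paragraphs: Kendrick's characterization of $1089$ palintiples gives us the binary sequence $r_k,\ldots,r_0$ with the stated structural properties ($r_1=r_{k-1}=1$, $r_0=r_k=0$, no isolated zeros or ones), and the Case~1 computation has already reduced the divisibility and congruence obligations to the single arithmetic condition $\hat n = b$, forcing $\hat b \equiv nb \pmod{b^2-1}$ with $\hat b = nb + \alpha(b^2-1)$.

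Let me first confirm the digit-validity claim, which I expect to be the crux. Setting $\hat n = b$ in Equation~\ref{new_digs}, I obtain the closed form
\[
\hat d_j = n^2 q\, r_j + nq\, r_{j-2} - r_{j-1} + \alpha\bigl( q\, r_{j+1} + (qnb-1) r_j - q\, r_{j-1} + qb\, r_{j-2} \bigr),
\]
and I would argue termwise. For the upper bound, since each original digit satisfies $d_j < b = \hat n$ and the $\hat d_j$ arise as the digits of a reverse multiple with multiplier $b$, one shows $\hat d_j < \hat b$ directly. For nonnegativity, the only danger is the $-r_{j-1}$ and $-q\,r_{j-1}$ contributions; here the hypothesis that $r_k,\ldots,r_0$ has \emph{no singleton zeros or ones} (apart from the forced endpoints $r_0=r_k=0$) is exactly what I need. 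Whenever $r_{j-1}=1$, at least one neighbor among $r_j, r_{j-2}$ must also equal $1$, and the positive contributions $n^2 q\, r_j$ or $nq\,r_{j-2}$ (and likewise the $\alpha$-terms) dominate the subtraction. I would handle this by a short case analysis on the pattern of $(r_{j+1}, r_j, r_{j-1}, r_{j-2})$, using the no-isolated-digit condition to rule out the bad configuration in which a lone $r_{j-1}=1$ is surrounded by zeros.

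Next I would establish that $\hat p$ is an honest palintiple. Since every $\hat d_j$ was constructed precisely to satisfy Equation~\ref{fund_rel} with carries $\hat c_{k-j+2}=d_{k-j+1}$, this is the statement that the digit-carry recurrence~\ref{fund} is satisfied with the prescribed carry sequence $(d_k,\ldots,d_0,0)$; the ``routine calculation'' alluded to in the text is the verification that
\[
(\hat d_{k+1}, \hat d_k, \ldots, \hat d_0)_{\hat b} = b\,(\hat d_0, \hat d_1, \ldots, \hat d_{k+1})_{\hat b},
\]
which I would confirm by checking that the carry produced at position $j$ equals $d_{j-1}$, consistent with $\hat c_j = d_{j-1}$. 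This amounts to summing the defining relations and telescoping.

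Finally, for the asymmetry claim I would compare the carry sequence $(\hat c_{k+1},\ldots,\hat c_0)=(d_k,\ldots,d_0,0)$ against the two symmetry conditions defined in the introduction. Recall that for the underlying $1089$ palintiple we have $d_0=q$ and $d_k=nq$, and since $n\ge 2$ (as $p$ is a nontrivial reverse multiple) these differ; more importantly the carry string here has length $k+2$ with a single trailing zero, so testing $\hat c_j=\hat c_{k+1-j}$ (symmetric) and $\hat c_j=\hat c_{k+2-j}$ (shifted-symmetric) against the explicit values $\hat c_j=d_{j-1}$ will fail in both cases. \textbf{The main obstacle} I anticipate is the nonnegativity argument of the second paragraph: getting the case analysis on neighboring $r$-values clean enough to cover the boundary indices $j\in\{0,1,k,k+1\}$, where the shifted indices $r_{j-2}$ or $r_{j+1}$ fall outside the defined range and must be interpreted as zero, without accidentally producing a negative digit at an endpoint.
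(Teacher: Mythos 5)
Your proposal is correct and follows essentially the same route as the paper: the same reduction via Kendrick's characterization and the Case 1 congruence (forcing $\hat{n}=b$ and $\hat{b}=nb+\alpha(b^2-1)$), the same digit-validity argument ($d_j < b = \hat{n}$ for the upper bound, the no-isolated-ones property of the $r$-sequence for nonnegativity), and the same routine verification via Equation \ref{fund_rel} that the prescribed carries $(d_k,\ldots,d_0,0)$ yield a genuine palintiple. If anything, your plan is more explicit than the paper, which simply asserts nonnegativity and never spells out the asymmetry check (symmetry fails since $\hat{c}_0=0\neq nq=\hat{c}_{k+1}$, shifted-symmetry since $q\neq nq$) that you carry out.
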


\begin{example}

The table below contains several examples of Hoey palintiples derived from the $(2,3)$-palintiple $(2,1,2,0,1)_3$,
including the general form obtained from the arguments establishing Theorem \ref{sd_pals}.\\

\begin{tabular}{|c|c|c|}

\hline

$(\hat{n},\hat{b})$ & $(\hat{d}_5, \hat{d}_4, \hat{d}_3, \hat{d}_2, \hat{d}_1, \hat{d}_0)_{\hat{b}}$  & $(\hat{c}_5, \hat{c}_4, \hat{c}_3, \hat{c}_2, \hat{c}_1, \hat{c}_0)$ \\ \hline

$(3,14)$ & $(5, 3, 12, 8, 10, 1)_{14}$  & $(2,1,2,0,1,0)$ \\ \hline

$(3,22)$ & $(8, 5, 19, 13, 16, 2)_{22}$  & $(2,1,2,0,1,0)$ \\ \hline  

$(3,30)$ & $(11, 7, 26, 18, 22, 3)_{30}$  & $(2,1,2,0,1,0)$ \\ \hline

$(3,6+8\alpha)$ & $(2+3\alpha,1+2\alpha,5+7\alpha,3+5\alpha,4+6\alpha,\alpha)_{6+8\alpha}$  & $(2,1,2,0,1,0)$ \\ \hline

\end{tabular}
\label{hoey_example}
\end{example}

\begin{theorem}
 No doubly-derived palintiples can be derived from a 1089 palintiple.
 \label{no_sd2_pals}
\end{theorem}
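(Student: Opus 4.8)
The plan is to argue by contradiction: assume a doubly-derived palintiple $\hat{p}$ obtained from a 1089 $(n,b)$-palintiple $p$ does exist, and then extract from the integrality of a single new digit a divisibility condition so restrictive that it forces $\hat{n}$ below a threshold already excluded by the carry structure. First I would record the two elementary facts that drive everything. On one hand, the carries of $\hat{p}$ are prescribed to be $(0,d_k,\ldots,d_0,0)$, so in particular $\hat{c}_{k+1}=d_k=nq$; since no carry of an $(\hat{n},\hat{b})$-palintiple can exceed $\hat{n}-1$, this yields the lower bound $\hat{n}>nq$. On the other hand, writing $g=\gcd(d_0,\hat{n}^2-1)=\gcd(q,\hat{n}^2-1)$ and $M=(\hat{n}^2-1)/g$, so that $\hat{b}=\alpha M$, the only features of the 1089 structure I will need are the boundary digits $d_0=q$ and $d_k=nq$.

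The crux is the digit $\hat{d}_1$. Evaluating Equation \ref{new_digs2} at $j=1$ and using $d_{k+1}=0$ gives
\[
\hat{d}_1=\frac{\alpha d_1}{g}-\frac{\hat{n}d_k+d_0}{\hat{n}^2-1}=\frac{\alpha d_1}{g}-\frac{q(n\hat{n}+1)}{\hat{n}^2-1}.
\]
Clearing denominators against $\hat{n}^2-1=gM$ and using that $\hat{d}_1$ must be an integer, I would read off that $M$ divides $q(n\hat{n}+1)$; by the definition of $M$ it also divides $\hat{n}^2-1$. The main work, and the step I expect to be the genuine obstacle, is to convert these two divisibilities into an unconditional upper bound on $M$ that does not grow with $\hat{n}$. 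The identities $\hat{n}\cdot q(n\hat{n}+1)=qn(\hat{n}^2-1)+q(\hat{n}+n)$ and $q(\hat{n}^2-1)=q(\hat{n}-n)(\hat{n}+n)+q(n^2-1)$ let me descend in two steps, first to $M\mid q(\hat{n}+n)$ and then to $M\mid q(n^2-1)$.

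Since $q(n^2-1)>0$, the relation $M\mid q(n^2-1)$ forces $M\le q(n^2-1)$, that is $\hat{n}^2-1\le g\,q(n^2-1)\le q^2(n^2-1)$ because $g\le q$. Hence $\hat{n}^2<q^2n^2$, so $\hat{n}<qn$, which directly contradicts the bound $\hat{n}>nq$ coming from the carry structure, completing the proof. I would emphasize in closing that the argument never invokes the detailed palindromic binary description of $p$ beyond the values $d_0=q$ and $d_k=nq$, and that it is uniform in the parameter $\alpha$; thus the Case 2 construction fails to produce a palintiple for every admissible choice of $\hat{n}$ and base $\hat{b}$, which is exactly the content of Theorem \ref{no_sd2_pals}.
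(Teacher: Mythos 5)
Your proof is correct, but it takes a genuinely different route from the paper's. Both arguments start identically---extract the integrality condition on the new digits from Equation \ref{new_digs2} and invoke the fact that carries must be smaller than the multiplier---but they diverge immediately after that. The paper reduces the integrality condition modulo $\hat{n}-1$ at both $j=1$ and $j=2$ (the latter requiring the palindromic binary-sequence description of the 1089 digits in order to compute $d_{k-1}+d_1$), concludes $2q\equiv 0 \pmod{\hat{n}-1}$, hence $q=\frac{\hat{n}-1}{2}$ or $q=\hat{n}-1$, hence $\gcd(q,\hat{n}^2-1)$ divides $\hat{n}-1$ so that $\hat{n}+1$ divides $\hat{b}$; it then appeals to the structural result of \cite{holt} that such a palintiple must be symmetric, forcing $nq=d_k=d_0=q$, a contradiction. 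You instead keep the full modulus: from $j=1$ alone you obtain $M\mid q(n\hat{n}+1)$ with $M=(\hat{n}^2-1)/g$, and your two-step descent (via $M\mid q(\hat{n}+n)$) to $M\mid q(n^2-1)$ converts this into the size bound $\hat{n}^2-1\le q^2(n^2-1)$, which collides with $\hat{n}>nq$. Your route is more self-contained: beyond $d_0=q$ and $d_k=nq$ it uses no structure of the 1089 digits, and it replaces the appeal to the symmetry classification with elementary gcd manipulations, yielding a purely quantitative incompatibility; the paper's route, by contrast, ties the nonexistence to the carry-structure taxonomy (symmetric versus asymmetric) that organizes the rest of the paper. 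One cosmetic point: when $q=1$ your final inequality gives only $\hat{n}\le qn$ rather than the strict $\hat{n}<qn$ you assert, but this still contradicts $\hat{n}>nq$, so nothing is lost.
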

\begin{proof}
Suppose there exists a doubly-derived $(\hat{n},\hat{b})$-palintiple $\hat{p}$ constructed from a 1089 $(n,b)$-palintiple 
$p=(d_k,d_{k-1}, \ldots, d_0)_b$ with carries $c_k, c_{k-1},\ldots,c_0$. Since $p$ is 1089, we may suppose that 
$n+1$ divides $b$ with quotient $q$, and that $c_j \equiv 0 \mod (n-1)$ as again implied by 
Theorem 14 of Kendrick \cite{kendrick_1}. 
Then $d_0=\frac{bc_1}{n^2-1}=\frac{b(n-1)}{n^2-1}=\frac{b}{n+1}=q$.
Equation \ref{new_digs2} then becomes
$$
\hat{d}_j=\frac{\alpha}{\gcd(q,\hat{n}^2-1)}(\hat{n}d_{k-j+2}+d_{j})-\frac{\hat{n}d_{k-j+1}+d_{j-1}}{\hat{n}^2-1}.
$$
Multiplying both sides by $q(\hat{n}^2-1)$, we have
$$
q(\hat{n}^2-1)\hat{d}_j=(\hat{n}^2-1)\frac{q \alpha}{\gcd(q,\hat{n}^2-1)}(\hat{n}d_{k-j+2}+d_{j})-q(\hat{n}d_{k-j+1}+d_{j-1}).
$$
Reducing modulo $\hat{n}-1$, we have $q(d_{k-j+1}+d_{j-1}) \equiv 0 \mod(\hat{n}-1)$. 
The cases $j=1$ and $j=2$ imply that $q(d_{k}+d_{0}) \equiv 0 \mod(\hat{n}-1)$
and $q(d_{k-1}+d_{1}) \equiv 0 \mod(\hat{n}-1)$. Thus,
$q(nq+q) \equiv 0 \mod(\hat{n}-1)$
and $q(nq-1+q-1) \equiv 0 \mod(\hat{n}-1)$,
which yields $2q \equiv 0 \mod(\hat{n}-1)$ by subtracting the second congruence from the first. Thus, since it is well-known 
that the carries of any palintiple must be less than the multiplier \cite{holt, sloane, sutcliffe, young_1}, 
we have $q=d_0=\hat{c}_1 \leq \hat{n}-1$. Hence, either $q=\frac{\hat{n}-1}{2}$, or $q=\hat{n}-1$. In any case,
$\gcd(q,\hat{n}^2-1)$ divides $\hat{n}-1$, so that $\hat{n}+1$ divides $\hat{b}$ by arguments in Case 2.
But this would imply by Theorem 6 in \cite{holt} that $\hat{p}$ is itself symmetric, so that
$qn=d_{k}=d_0=q$, which is impossible.
\end{proof}

\section{Palintiples Derived From Shifted-Symmetric Palintiples}

We now consider singly-derived palintiples constructed from shifted-symmetric palintiples. 
For brevity, such palintiples will be called \textit{Sutcliffe} palintiples.
We then suppose by Theorem 9 in \cite{holt} that 
$(b-n)c_j \equiv (nb-1)c_j \equiv 0 \mod(n^2-1)$ and $d_j=\frac{(b-n)c_{j+1}+(nb-1)c_j}{n^2-1}$.
Then, by the reasoning of Case 1, we have 
$\hat{b} \frac{(b-n)c_1}{n^2-1} \equiv \hat{n} \frac{(nb-1)c_k}{n^2-1} \mod(\hat{n}^2-1)$.
We shall suppose that $s=\frac{\hat{n}(nb-1)}{b-n}$ is an integer. 
Then, since $c_j=c_{k-j+1}$ for all $0 \leq j \leq k$ by definition, 
$s$ is a particular solution for $\hat{b}$ to the congruence above,
so that in general $\hat{b}=\frac{\hat{n}(nb-1)}{b-n}+\alpha \frac{\hat{n}^2-1}{\gcd(d_0,\hat{n}^2-1)}$.
Therefore, by Equation \ref{new_digs}, 
 \[
 \begin{split}
  \hat{d}_j=&    
   \frac{s(nb-1)-(b-n)}{(\hat{n}-1)(n^2-1)} c_j+\frac{nb-1}{n^2-1}c_{j-1}\\
   &+\frac{\alpha}{\gcd(d_0,\hat{n}^2-1)} \left( \frac{b-n}{n^2-1}c_{j+1}
    +  (\hat{n}+1)\frac{nb-1}{n^2-1}c_j
  +  \hat{n}\frac{b-n}{n^2-1}c_{j-1} \right).\\
  \end{split}
 \]
Supposing that $\hat{n} > d_j$ guarantees, by Equation \ref{fund_rel}, that $0 \leq \hat{d}_j < \hat{b}$. 
Hence, we have the following:
 
\begin{theorem}
Suppose $(d_k,d_{k-1}, \ldots, d_0)_b$ is a shifted-symmetric $(n,b)$-palintiple with carries $c_k, c_{k-1},\ldots,c_0$.
If there exists a natural number $\hat{n}$ such that  $s=\frac{\hat{n}(nb-1)}{b-n}$ is an integer,
and $\hat{n} > d_j$ and $s \frac{(nb-1)c_j}{n^2-1} \equiv \frac{(b-n)c_j}{n^2-1} \mod(\hat{n}-1)$ for all $0 \leq j \leq k$,
then for every $\alpha \geq 1$ such that $\gcd(d_0,\hat{n}^2-1)$ divides 
$\alpha \frac{(b-n)(c_{j+1}+\hat{n}c_{j-1})+(\hat{n}+1)(nb-1)c_j}{n^2-1}$ for all $0 \leq j \leq k$,
an asymmetric $(k+2)$-digit $(\hat{n},\hat{b})$-palintiple exists
with carries $(\hat{c}_{k+1}, \hat{c}_{k},\ldots,\hat{c}_0)$ given by $(d_k,d_{k-1}, \ldots, d_0,0)$,
where $\hat{b}=s+\alpha \frac{\hat{n}^2-1}{\gcd(d_0,\hat{n}^2-1)}$.
\label{ssd1_pals}
\end{theorem}

Theorem \ref{ssd1_pals} will now be applied to a two-digit $(n,b)$-palintiple $p=(d_1,d_0)_b$ 
with one non-zero carry $c$. We note that $p$ is trivially shifted-symmetric. 
Provided that there is an $\hat{n}$ which satisfies the conditions of Theorem \ref{ssd1_pals}, then
 $(\hat{d}_2,\hat{d}_1,\hat{d}_0)_{\hat{b}}$, given by
 \[
 \left(
 \begin{matrix}
 \hat{d}_2\\\\
 \hat{d}_1\\\\
 \hat{d}_0
 \end{matrix}
 \right)
=
\left( \begin{matrix}
  \left(\frac{nb-1}{n^2-1}+\frac{\hat{n} \alpha (b-n)}{\gcd(d_0,\hat{n}^2-1)(n^2-1)}\right) c\\\\
  \left(\frac{s(nb-1)-(b-n)}{(\hat{n}-1)(n^2-1)}+\frac{\alpha(\hat{n}+1)(nb-1)}{\gcd(d_0,\hat{n}^2-1)(n^2-1)}\right)c\\\\
   \frac{\alpha(b-n)}{\gcd(d_0,\hat{n}^2-1)(n^2-1)}c
 \end{matrix}\right)
  =
 \left( \begin{matrix}
  d_1+\alpha \frac{\hat{n}d_0}{\gcd(d_0,\hat{n}^2-1)} \\\\
  \frac{s d_1-d_0}{\hat{n}-1}+\alpha \frac{(\hat{n}+1)d_1}{\gcd(d_0,\hat{n}^2-1)}\\\\
   \alpha \frac{d_0}{\gcd(d_0,\hat{n}^2-1)}
 \end{matrix}\right),
 \]
is a 3-digit $(\hat{n},\hat{b})$-palintiple with carries $(\hat{c}_2,\hat{c}_1,\hat{c}_0)=(d_1,d_0,0)$
for every $\alpha \geq 1$, where $\hat{b}=\frac{\hat{n}(nb-1)}{b-n}+\alpha \frac{\hat{n}^2-1}{\gcd(d_0,\hat{n}^2-1)}$.
Thus, we have the following corollary which provides conditions for the existence of asymmetric palintiples.

\begin{corollary}
If $(d_1,d_0)_b$ is an $(n,b)$-palintiple, and there is an 
$\hat{n} > d_1$ such that $s=\frac{\hat{n}(nb-1)}{b-n}$ is an integer, and
$s d_1 \equiv d_0 \mod(\hat{n}-1)$, 
then asymmetric $(\hat{n},\hat{b})$-palintiples exist,
where $\hat{b}=s+\alpha\frac{\hat{n}^2-1}{\gcd(d_0,\hat{n}^2-1)}$ for any $\alpha \geq 1$.
\label{asym_existence_2}
\end{corollary}

\begin{example}
Consider the $(2,5)$-palintiple $(3,1)_5$ with carries $(c,0)=(1,0)$. We see that 
$\hat{n}=5$ and $\hat{n}=9$ satisfy the conditions of Corollary \ref{asym_existence_2}, from which we get the
$(5,39)$-palintiple $(8,29,1)_{39}$, the $(9,107)$-palintiple $(12,40,1)_{107}$, and
in general, the $(5,5+24\alpha)$-palintiple $(3+5\alpha,11+18\alpha,\alpha)_{15+24\alpha}$,
and the $(9,27+80\alpha)$-palintiple $(3+9\alpha,10+30\alpha,\alpha)_{27+80\alpha}$,
for $\alpha \geq 1$, all with carries $(\hat{c}_2,\hat{c}_1,\hat{c}_0)=(3,1,0)$. 
\end{example}

We now consider doubly-derived palintiples constructed from shifted-symmetric $(n,b)$-palintiples. 
This family of palintiples will be called \textit{Pudwell} palintiples.
Letting $D=\gcd(d_0,\hat{n}^2-1)$, we have by Equation \ref{new_digs2} that 
$D(\hat{n} d_{k-j+1} + d_{j-1}) \equiv 0 \mod(\hat{n}^2-1)$.
Again, by Theorem 9 of \cite{holt}, we replace each digit $d_j$ by $\frac{(b-n)c_{j+1}+(nb-1)c_j}{n^2-1}$,
so that
$$
D\left(\frac{[\hat{n}(nb-1) +(b-n)]c_{j}+ [\hat{n}(b-n)+(nb-1)]c_{j-1}}{n^2-1}\right) \equiv 0 \mod(\hat{n}^2-1).
$$
Since $c_0=0$ by definition, we have by induction over $j$ that
$$
D\frac{[\hat{n} (nb-1) + (b-n)]c_{j}}{n^2-1} \equiv 0 \mod(\hat{n}^2-1)
$$
for all $0 \leq j \leq k$.
Thus, both $\hat{n}-1$ and $\hat{n}+1$ divide $D\frac{[\hat{n} (nb-1) + (b-n)]c_{j}}{n^2-1}$.
Consequently, 
\[
\begin{split}
D\frac{[\hat{n} (nb-1) + (b-n)]c_{j}}{n^2-1}  & \equiv D\frac{[(nb-1) + (b-n)]c_{j}}{n^2-1}\\
 & \equiv D\frac{[(b-1)(n+1)]c_{j}}{n^2-1} \equiv D\frac{(b-1)c_{j}}{n-1} \equiv 0 \mod(\hat{n}-1)\\
\end{split}
\]
and
\[
\begin{split}
D\frac{[\hat{n} (nb-1) + (b-n)]c_{j}}{n^2-1} & \equiv D\frac{[(-1)(nb-1) + (b-n)]c_{j}}{n^2-1}\\ 
& \equiv D\frac{[-((b+1)(n-1))]c_{j}}{n^2-1} \equiv D\frac{-(b+1)c_{j}}{n+1} \equiv 0 \mod(\hat{n}+1).\\
\end{split}
\]
That is, $\hat{n}-1$ and $\hat{n}+1$ must divide $D\frac{(b-1)c_j}{n-1}$ and $D\frac{(b+1)c_j}{n+1}$, respectively.

Using the above conclusion and a computer, we have found no examples for which $\hat{n} \neq b$.
However, we have not been able to rule out this possibility. 
Checking all possibilities for all $b \leq 500$ yielded no Pudwell palintiples for which $\hat{n}$ and $b$ are not equal.
Therefore, we shall narrow our scope and consider the case $\hat{n}=b$ while leaving the $\hat{n} \neq b$ case
as an open problem.

By the above arguments, we may say for each $c_j$ that
$$
D\frac{[\hat{n}(nb-1) + (b-n)]c_{j}}{n^2-1}=Q_j(\hat{n}^2-1)
$$
for some integer $Q_j$. Replacing $\hat{n}$ with $b$, we have
$$D\left(\frac{[b(nb-1) + (b-n)]c_{j}}{n^2-1}\right)=Q_j(b^2-1).$$
But $D\left(\frac{[b(nb-1) + (b-n)]c_{j}}{n^2-1}\right)=\frac{(b^2-1)nDc_{j}}{n^2-1}$,
so that $$\frac{n Dc_{j}}{n^2-1}=Q_j.$$
Since $n$ and $n^2-1$ are relatively prime, 
we have that $n^2-1$ divides each $D c_j$ with some quotient $q_j$ for all $0 \leq j \leq k$.

Once again, replacing each digit $d_j$ in Equation \ref{new_digs2} with $\frac{(b-n)c_{j+1}+(nb-1)c_j}{n^2-1}$, 
and applying the above result that $D c_j=(n^2-1)q_j$ for all $0 \leq j \leq k$, we obtain
$$
\hat{d}_j=\frac{\alpha(b d_{k-j+2}+d_{j})-(n q_j+q_{j-1})}{D}.
$$
As argued previously, $0 \leq \hat{d}_j<\hat{b}$ since
each $d_j<b=\hat{n}$. We therefore have the following:

\begin{theorem}
Suppose $(d_k,d_{k-1}, \ldots, d_0)_b$ is a shifted-symmetric $(n,b)$-palintiple with carries $c_k, c_{k-1},\ldots,c_0$
and let $D=\gcd(d_0,b^2-1)$. If $n^2-1$ divides $D c_j$ with quotient $q_j$ for all $0 \leq j \leq k$, 
then for every $\alpha \geq 1$ such that $D$ divides $\alpha (bd_{k-j+2}+d_{j})-(n q_j+q_{j-1})$ for all $0\leq j \leq k$, 
a $(k+3)$-digit asymmetric $(b,\hat{b})$-palintiple exists with carries 
$(\hat{c}_{k+2}, \hat{c}_{k+1},\ldots,\hat{c}_0)$ given by $(0,d_{k}, d_{k-1}, \ldots, d_{0},0)$, where 
$\hat{b}=\alpha \frac{b^2-1}{D}$.
\label{ssd2_pals}
\end{theorem}

The case $k=1$ gives us another condition which guarantees the existence of asymmetric palintiples.

\begin{corollary}
Suppose $(d_1,d_0)_b$ is an $(n,b)$-palintiple with one non-zero carry $c$ and $D=\gcd(d_0,b^2-1)$.
If $n^2-1$ divides $Dc$ with quotient $q$, and $\gcd(d_1,D)$ divides $q$,
then there exists an asymmetric $(b,\hat{b})$-palintiple, where $\hat{b}=\alpha \frac{b^2-1}{D}$.
\label{asym_existence_4}
\end{corollary}

\begin{proof}
The arguments leading up to Theorem \ref{ssd2_pals} give us that a new 4-digit palintiple
  $
  (\hat{d}_3, \hat{d}_2,\hat{d}_1, \hat{d}_0)_{\hat{b}}
  $
with carries $(0,d_1,d_0,0)$ must equal 
  $
  (\frac{\alpha b d_0}{D}, \frac{\alpha bd_1-q}{D},\frac{\alpha d_1-nq}{D},\frac{\alpha d_0}{D})_{\alpha \frac{b^2-1}{D}}
  $
for some $\alpha$, where $k=1$. Since $\gcd(d_1,D)$ divides $q$, there is an $\alpha \geq 1$ such that 
$\alpha b d_1 \equiv q \mod D$, and since $(nb-1)q=d_1 D$, we have $\alpha d_1 \equiv nq \mod D$.
 \end{proof}

 \begin{example}
 A family of Pudwell palintiples may be constructed from the $(6,55)$-palintiple $(47,7)_{55}$ with carries $(c,0)=(5,0)$.
 The conditions of Corollary \ref{asym_existence_4} are satisfied, and we have that
$(\hat{d}_3, \hat{d}_2,\hat{d}_1, \hat{d}_0)_{\hat{b}}$ given by 
$\left(55 \alpha,\frac{ 2585 \alpha-1}{7}, \frac{47 \alpha-6}{7},\alpha\right)_{432\alpha}$ is a
$(55,432\alpha)$-palintiple with carries $(\hat{c}_3,\hat{c}_2,\hat{c}_1,\hat{c}_0)=(0,47,7,0)$, 
where $\alpha$ is any natural number congruent to 4 modulo 7.
 \end{example}

\section{Palintiples Derived from Palintiple Reversals}

Digit reversals of palintiples also appear in the carries of higher-base palintiples. 
Therefore, we now construct asymmetric palintiples from digit-reversals of palintiples.
We will not present the amount of detail as in the previous sections, as the arguments are essentially the same
for each case. However, we will highlight points which deserve additional explanation.  
We shall consider both $(k+2)$-digit $(\hat{n},\hat{b})$-palintiples with carries
$(\hat{c}_{k+1}, \hat{c}_{k},\ldots,\hat{c}_0)$ of the form $(d_0,d_{1}, \ldots, d_{k},0)$, 
and $(k+3)$-digit $(\hat{n},\hat{b})$-palintiples with carries
$(\hat{c}_{k+2}, \hat{c}_{k+1},\ldots,\hat{c}_0)$ of the form $(0,d_0,d_{1}, \ldots, d_{k},0)$.
Such palintiples will be called \textit{singly-$\rho$-derived} and \textit{doubly-$\rho$-derived}, respectively.

\subsection{Palintiples Derived from Reversals of 1089 Palintiples}

We shall now consider families of singly-$\rho$-derived palintiples constructed from 1089 palintiples.
These shall be called \textit{$\rho$-Hoey} palintiples.
In a manner similar to the arguments leading to Equation \ref{new_digs}, 
it must be that $\hat{b}d_k \equiv \hat{n} d_0 \mod(\hat{n}^2-1)$, or
$\hat{b} n q \equiv \hat{n} q \mod(\hat{n}^2-1)$. Thus, in order for a solution $\hat{b}$ to exist, 
we require both that $\gcd(nq,\hat{n}^2-1)$ divide $\hat{n}q$ 
(and consequently, $q$, since $\hat{n}$ and $\hat{n}^2-1$ are relatively prime), 
and that $n$ and $\hat{n}^2-1$ are relatively prime. Under these assumptions, we then have that 
$\hat{b}=m \hat{n} + \alpha \frac{\hat{n}^2-1}{\gcd(nq,\hat{n}^2-1)}$, where $m$ is the multiplicative inverse 
of $n$ modulo $\hat{n}^2-1$. 
Reparameterizing, we let $\ell$ be the least non-negative residue of $m\hat{n}$ 
modulo $\frac{\hat{n}^2-1}{\gcd(nq,\hat{n}^2-1)}$, so that 
$\hat{b}=\ell + \alpha \frac{\hat{n}^2-1}{\gcd(nq,\hat{n}^2-1)}$ for $\alpha \geq 1$.
Then
\[
\begin{split}
\hat{d}_j
=&\frac
{(\ell n-\hat{n}) qr_{j+1}+(\hat{n}-\ell-nq + \ell \hat{n} q)r_{j}+(1-\hat{n} \ell -\hat{n} n q+\ell q)r_{j-1}+(\ell n \hat{n}-1)qr_{j-2}}
{\hat{n}^2-1}\\
&+\alpha \frac{nqr_{j+1}+(\hat{n}q-1)r_{j}+(q-\hat{n})r_{j-1}+\hat{n}nq r_{j-2}}{\gcd(nq,\hat{n}^2-1)}.
\end{split}
\]
Since $\ell n q \equiv \hat{b} n q \equiv \hat{n} q \mod(\hat{n}^2-1)$, we have both 
$(\ell n-\hat{n})q \equiv 0 \mod(\hat{n}^2-1)$ and $(\ell n \hat{n}-1)q \equiv 0 \mod(\hat{n}^2-1)$.
Thus, in order to ensure that the above is an integer, 
we shall require  $\hat{n}-\ell-nq + \ell \hat{n} q \equiv 0 \mod(\hat{n}^2-1)$ and 
$1-\hat{n} \ell -\hat{n} n q+\ell q \equiv 0 \mod(\hat{n}^2-1)$, 
both of which are equivalent as seen by multiplying the first congruence by $\hat{n}$.
Since $\gcd(nq,\hat{n}^2-1)$ divides $q$, as mentioned above, we have that 
$m \hat{n} q \equiv \hat{b}q \equiv \ell q \mod(\hat{n}^2-1)$.
Multiplying the second congruence above by $q$ and then substituting $\ell q$ with $m \hat{n} q$, we obtain
$q- m q -\hat{n} n q^2+m \hat{n} q \equiv 0 \mod(\hat{n}^2-1)$. Multiplying by $n$, the multiplicative inverse of $m$,
we then have $(n-1)\hat{n}q \equiv (n^2-1) q^2 \mod(\hat{n}^2-1)$. Therefore, as before,  we have that $\hat{n}=b=q(n+1)$, 
so that $\gcd(nq,\hat{n}^2-1)=\gcd(nq,b^2-1)=1$. Thus,
\[
\begin{split}
\hat{d}_j
=&\frac
{(\ell n-b) qr_{j+1}+(b-\ell-nq + \ell b q)r_{j}+(1-b \ell -b n q+\ell q)r_{j-1}+(\ell n b-1)qr_{j-2}}
{b^2-1}\\
&+\alpha (nqr_{j+1}+(bq-1)r_{j}+(q-b)r_{j-1}+bnq r_{j-2}).
\end{split}
\]

The above arguments give us the $\rho$-derived compliment to Theorem \ref{sd_pals}.

\begin{theorem}
Suppose $(d_k,d_{k-1}, \ldots, d_0)_b$ is a 1089 $(n,b)$-palintiple such that
$b^2-1$ and $n$ are relatively prime, and $m$ is the multiplicative inverse of $n$ modulo $b^2-1$.
Furthermore, let $\ell$ be the least non-negative residue of $mb$ modulo $b^2-1$.
Then for every $\hat{b} > \ell$ such that $\hat{b} \equiv \ell \mod (b^2-1)$, 
there exists an asymmetric $(k+2)$-digit $(b,\hat{b})$-palintiple 
with carries $(\hat{c}_{k+1}, \hat{c}_{k},\ldots,\hat{c}_0)$ given by $(d_0,d_{1}, \ldots, d_{k}, 0)$.
\label{sd_pals_rev}
\end{theorem}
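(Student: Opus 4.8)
The plan is to recognize that the substantive work has already been carried out in the computation immediately preceding the statement. There we imposed the reversed leading-carry congruence $\hat{b}d_k \equiv \hat{n}d_0 \mod(\hat{n}^2-1)$, which with $d_0 = q$ and $d_k = nq$ forces $n$ to be coprime to $\hat{n}^2-1$ and gives $\hat{b} = m\hat{n} + \alpha\frac{\hat{n}^2-1}{\gcd(q,\hat{n}^2-1)}$; then the requirement that every term of the $\hat{d}_j$ formula arising from Equation \ref{new_digs} be an integer collapses to $(n-1)\hat{n} \equiv (n^2-1)q \mod(\hat{n}^2-1)$, which forces $\hat{n} = b = q(n+1)$, whence $\gcd(q, b^2-1) = 1$ and $\hat{b} = \ell + \alpha(b^2-1)$ for $\alpha \ge 1$. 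The condition $\hat{b} > \ell$ with $\hat{b} \equiv \ell \mod(b^2-1)$ in the statement is exactly this parameterization. It therefore remains only to verify that the resulting $\hat{d}_j$ form a genuine asymmetric palintiple with the prescribed carries.

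First I would establish the digit bounds $0 \le \hat{d}_j < \hat{b}$. For the upper bound I would argue as in Theorem \ref{sd_pals}: the prescribed carries are the original digits $d_j$, each satisfying $d_j < b = \hat{n}$, and since the carries of any palintiple are bounded by its multiplier, bounding the numerator of Equation \ref{fund_rel} by its value when all carries equal $\hat{n}-1$ yields $\hat{d}_j < \hat{b}$. The non-negativity is the delicate step, and I expect it to be the main obstacle. Reading off the coefficients of $r_{j+1}, r_j, r_{j-1}, r_{j-2}$ in the displayed $\hat{d}_j$ formula, the only negative contribution comes from $r_{j-1}$ (with $\alpha$-coefficient $q - b < 0$). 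I would then invoke the hypothesis that the palindromic binary sequence $r_k, \ldots, r_0$ has no isolated ones or zeros apart from the endpoints: whenever $r_{j-1} = 1$, an adjacent $r_j = 1$ or $r_{j-2} = 1$ supplies a compensating positive term, and a short estimate ($bq - 1 \ge b - q$ and $bnq \ge b - q$) confirms the sum stays non-negative across every admissible local pattern.

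Next I would confirm the palintiple property. Because each $\hat{d}_j$ was constructed precisely to satisfy Equation \ref{fund_rel} with carries $(d_0, d_1, \ldots, d_k, 0)$, the identity $(\hat{d}_{k+1}, \ldots, \hat{d}_0)_{\hat{b}} = b\,(\hat{d}_0, \ldots, \hat{d}_{k+1})_{\hat{b}}$ follows from the same routine telescoping verification used for Theorem \ref{sd_pals}: multiplying the digit string by $\hat{n} = b$ and propagating carries reproduces the reversed string. Finally, for asymmetry I would read the carry sequence directly as $\hat{c}_{k+1-i} = d_i$ for $0 \le i \le k$ and $\hat{c}_0 = 0$. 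The symmetric condition $\hat{c}_j = \hat{c}_{k+1-j}$ fails at $j = 0$, since $\hat{c}_0 = 0$ while $\hat{c}_{k+1} = d_0 = q \ne 0$, and the shifted-symmetric condition $\hat{c}_j = \hat{c}_{k+2-j}$ fails at $j = k+1$, since it would force $d_0 = d_k$, i.e. $q = nq$, which is impossible for $n \ge 2$. Hence the derived palintiple is asymmetric, completing the proof.
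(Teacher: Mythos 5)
Your proposal is correct and follows essentially the same route as the paper, whose ``proof'' of Theorem \ref{sd_pals_rev} is precisely the computation preceding it: the congruence $\hat{b}d_k \equiv \hat{n}d_0 \mod(\hat{n}^2-1)$ forcing $\hat{n}=b=q(n+1)$ and $\hat{b}=\ell+\alpha(b^2-1)$, followed by the same digit-bound, palintiple-identity, and asymmetry arguments imported from Theorem \ref{sd_pals}. Your write-up in fact makes explicit several details (the non-negativity estimate over local patterns of the sequence $r_k,\ldots,r_0$, and the failure of both the symmetric and shifted-symmetric conditions) that the paper leaves implicit.
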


\begin{example}
 Applying the arguments for Theorem \ref{sd_pals_rev} to the well-known
 $(4,10)$-palintiple  $(8, 7, 1, 2)_{10}$  with carries $(c_3,c_2,c_1,c_0)=(0, 3, 3, 0)$,
 we have $\ell=52$, which gives rise to the  family of $(10,52+99\alpha)$-palintiples 
 with digits $(\hat{d}_4, \hat{d}_3, \hat{d}_2, \hat{d}_1, \hat{d}_0)_{\hat{b}}$ given by
 $(42+80\alpha, 37+72\alpha, 5+11\alpha, 14+27\alpha, 4+8\alpha)_{52+99\alpha}$,
 all with carries $(\hat{c}_4, \hat{c}_3, \hat{c}_2, \hat{c}_1,\hat{c}_0)=(2, 1, 7, 8, 0)$ for all $\alpha \geq 1$.
\end{example}

\begin{remark}
  Although it is not always the case, the example above also yields a palintiple for $\alpha=0$.
\end{remark}

It can be shown by an argument nearly identical to that of Theorem \ref{no_sd2_pals} that 
no doubly-$\rho$-derived palintiples can be constructed from reversals of 1089 palintiples.

\subsection{Palintiples Derived from Reversals of Shifted-Symmetric Palintiples}

Singly-$\rho$-derived palintiples constructed from shifted-symmetric palintiples 
(called $\rho$-Sutcliffe palintiples) yield an argument and theorem statement nearly identical to 
that of Theorem \ref{ssd1_pals}, with the exception that the roles of $b-n$ and $nb-1$, as well as 
$d_0$ and $d_k$, are interchanged. 
 
\begin{theorem}
Suppose $(d_k,d_{k-1}, \ldots, d_0)_b$ is a shifted-symmetric $(n,b)$-palintiple with carries $c_k, c_{k-1},\ldots,c_0$.
If there exists a natural number $\hat{n}$ such that  $s=\frac{\hat{n}(b-n)}{nb-1}$ is an integer,
and $\hat{n} > d_j$ and $s \frac{(b-n)c_j}{n^2-1} \equiv \frac{(nb-1)c_j}{n^2-1} \mod(\hat{n}-1)$ for all $0 \leq j \leq k$,
then for every $\alpha \geq 1$ such that $\gcd(d_k,\hat{n}^2-1)$ divides 
$\alpha \frac{(nb-1)(c_{j+1}+\hat{n}c_{j-1})+(\hat{n}+1)(b-n)c_j}{n^2-1}$ for all $0 \leq j \leq k$,
an asymmetric $(k+2)$-digit $(\hat{n},\hat{b})$-palintiple exists
with carries $(\hat{c}_{k+1}, \hat{c}_{k},\ldots,\hat{c}_0)$ given by $(d_0,d_{1}, \ldots, d_k,0)$,
where $\hat{b}=s+\alpha \frac{\hat{n}^2-1}{\gcd(d_k,\hat{n}^2-1)}$.
\label{ssd1_pals_rev}
\end{theorem}

\begin{corollary}
If $(d_1,d_0)_b$ is an $(n,b)$-palintiple, and there exists an 
$\hat{n} > d_1$ such that $s=\frac{\hat{n}(b-n)}{nb-1}$ is an integer, and
$s d_0 \equiv d_1 \mod(\hat{n}-1)$, 
then asymmetric $(\hat{n},\hat{b})$-palintiples exist,
where $\hat{b}=s+\alpha\frac{\hat{n}^2-1}{\gcd(d_1,\hat{n}^2-1)}$ for any $\alpha \geq 1$.
\label{asym_existence_5}
\end{corollary}

\begin{example}
 Corollary \ref{asym_existence_5} applies to the $(2,5)$-palintiple $(3,1)_5$  with one nontrivial carry $c=1$. 
 The value $\hat{n}=9$ satisfies its hypotheses, giving us the family of $(9,3+80\alpha)$-palintiples
 $(1+27\alpha,10\alpha,3 \alpha)_{3+80\alpha}$, where $\alpha$ is any natural number, each with carries 
 $(\hat{c}_2,\hat{c}_1,\hat{c}_0)=(1,3,0)$.
 \end{example}

Considering doubly-$\rho$-derived palintiples constructed from shifted-symmetric palintiples ($\rho$-Pudwell),
we obtain a $\rho$-derived compliment to Theorem \ref{ssd2_pals} whose argument (like that of Theorem \ref{ssd1_pals_rev})
transposes $b-n$ and $nb-1$, as well as $d_0$ and $d_k$. 

\begin{theorem}
Suppose $(d_k,d_{k-1}, \ldots, d_0)_b$ is a shifted-symmetric $(n,b)$-palintiple with carries $c_k, c_{k-1},\ldots,c_0$,
and let $D=\gcd(d_k,b^2-1)$. If $n^2-1$ divides $D c_j$ with quotient $q_j$ for all $0 \leq j \leq k$, 
then for every $\alpha \geq 1$ such that $D$ divides $\alpha (bd_{j-2}+d_{k-j})-(q_j+nq_{j-1})$ for all $0\leq j \leq k$, 
a $(k+3)$-digit asymmetric $(b,\hat{b})$-palintiple exists with carries 
$(\hat{c}_{k+2}, \hat{c}_{k+1},\ldots,\hat{c}_0)$ given by $(0,d_{0}, d_{1}, \ldots, d_{k},0)$, where 
$\hat{b}=\alpha \frac{b^2-1}{D}$.
\label{ssd2_pals_rev}
\end{theorem}

\begin{corollary}
Suppose $(d_1,d_0)_b$ is an $(n,b)$-palintiple with one non-zero carry $c$, and $D=\gcd(d_1,b^2-1)$.
If $n^2-1$ divides $Dc$ with quotient $q$, and $\gcd(d_0,D)$ divides $q$, 
then there exists an asymmetric $(b,\hat{b})$-palintiple, where $\hat{b}=\alpha \frac{b^2-1}{D}$.
\label{asym_existence_6}
\end{corollary}

\begin{example}
We again look at the $(2,5)$-palintiple $(3,1)_5$ with carry $c=1$. 
The conditions of Corollary \ref{asym_existence_6} are satisfied, and we get the
$(5, 8 \alpha)$-palintiple $(5\alpha,\frac{5 \alpha-2}{3},\frac{\alpha-1}{3},\alpha)_{8\alpha}$
with carries $(\hat{c}_3,\hat{c}_2,\hat{c}_1,\hat{c}_0)=(0,1,3,0)$, where 
$\alpha \equiv 1 \mod 3$. 
\end{example}

\section{Palinomials and Derived Palintiples}

We recall a definition from \cite{holt}:
the $(n,b)$-\textit{palinomial} induced by an $(n,b)$-palintiple $(d_k, \ldots,d_0)_b$ is the polynomial
$$\mbox{Pal}(x)=\sum_{j=0}^{k}(d_{j}-nd_{k-j})x^j.$$

\begin{theorem}
Palinomials induced by 1089 palintiples have at least one root on the unit circle.
\end{theorem}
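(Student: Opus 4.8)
The plan is to compute the palinomial's coefficients directly from the digit description of a 1089 palintiple and read off an explicit factorization. By the results recalled in Section 3, a 1089 $(n,b)$-palintiple is symmetric with $n+1 \mid b$, $b = q(n+1)$, and $d_j = nq\,r_{k-j+1} + q\,r_{j+1} - r_j$ for a palindromic binary sequence $r_k,\dots,r_0$ (so $r_j = r_{k-j}$), with $d_0 = q$ and $d_k = nq$. Substituting this into $d_j - n d_{k-j}$ and using $r_{k-j} = r_j$, the two $nq\,r_{k-j+1}$ contributions cancel and one finds
$$d_j - n d_{k-j} = (n-1)\bigl(r_j - b\,r_{j+1}\bigr).$$
Because $r_0 = r_k = 0$, the shifted sum $\sum_j r_{j+1}x^j = R(x)/x$ is a genuine polynomial, where $R(x) = \sum_{j=0}^k r_j x^j$, so the palinomial factors as
$$\mathrm{Pal}(x) = (n-1)\,(x-b)\,\tilde R(x), \qquad \tilde R(x) = \sum_{j=0}^{k-2} r_{j+1}x^j.$$
Since $b>1$, the root $x=b$ is not on the unit circle, so it suffices to produce a unit-circle root of $\tilde R$.

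The next step is to exploit symmetry. The palindrome relation $r_j = r_{k-j}$ makes $\tilde R$ self-reciprocal of degree $k-2$, that is $\tilde R(x) = x^{k-2}\tilde R(1/x)$, so its roots are invariant under $x \mapsto 1/x$; moreover $\tilde R$ has nonnegative coefficients with $\tilde R(0) = r_1 = 1$ and leading coefficient $r_{k-1} = 1$, whence $\tilde R(x) > 0$ for $x \ge 0$ and every real root is negative. When $k$ is odd, $\tilde R$ has odd degree, and a self-reciprocal polynomial of odd degree must have one of the fixed points of $x \mapsto 1/x$ as a root; the point $x=1$ is excluded since $\tilde R(1)=\sum_j r_{j+1}>0$, so $x = -1$ is a root, and $-1$ lies on the unit circle. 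This settles the odd case.

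For $k$ even I would pass to the Chebyshev variable $z = x + x^{-1}$: writing $\tilde R(x) = x^{(k-2)/2} Q(z)$ expresses the self-reciprocal polynomial through a real polynomial $Q$ of degree $(k-2)/2$, and a root of $\tilde R$ lies on the unit circle exactly when the corresponding root of $Q$ lies in $[-2,2]$, since $x = e^{i\theta}$ gives $z = 2\cos\theta$. One checks $Q(2) = \tilde R(1) > 0$ and $Q(-2) = (-1)^{(k-2)/2}\tilde R(-1)$, so whenever these have opposite signs the intermediate value theorem immediately yields a root in $(-2,2)$. The genuinely hard part is the remaining subcase, where $Q(2)$ and $Q(-2)$ share a sign; already the single-block palintiples, for which $\tilde R = (x^{k-1}-1)/(x-1)$, have $Q$ positive at both endpoints yet still vanishing inside $(-2,2)$, so the endpoint test alone cannot suffice.

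To handle that subcase I would use the \emph{block structure} of $r$: grouping the maximal runs of ones gives $R(x) = \sum_i x^{a_i}(x^{\ell_i}-1)/(x-1)$ with every run length $\ell_i \ge 2$, which exhibits cyclotomic factors $(x^{\ell_i}-1)/(x-1)$ and suggests that every root of $\tilde R$ is in fact a root of unity, a far stronger conclusion than needed. The main obstacle is converting this structural observation into a proof that $Q$ attains a nonpositive value somewhere on $[-2,2]$ (equivalently, that the real cosine sum $g(\theta) = e^{-ik\theta/2}R(e^{i\theta})$, which satisfies $g(0) > 0$, changes sign on $(0,\pi]$). I expect this to require an induction on the number and lengths of the runs, exploiting the constraint that no run has length one, rather than any single clean evaluation.
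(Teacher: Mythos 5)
Your opening computation is correct, and it reproduces the paper's own first step: substituting $d_j=nqr_{k-j+1}+qr_{j+1}-r_j$ and $r_{k-j}=r_j$ into $d_j-nd_{k-j}$ does give $\mbox{Pal}(x)=(n-1)(x-b)\tilde{R}(x)$, which is the paper's identity $\mbox{Pal}(x)=(x-b)\sum_{j=1}^{k}c_jx^{j-1}$ in disguise, since the carries of a 1089 palintiple are exactly $c_j=(n-1)r_j$. Your odd-$k$ case is also sound: a palindromic polynomial of odd degree always vanishes at $-1$, and $\tilde{R}(1)>0$ rules out $+1$. But the theorem is only half proved. For even $k$ you reduce to showing that $Q$ has a root in $[-2,2]$, you correctly observe that the endpoint sign test fails already for single-block palintiples such as $\tilde{R}(x)=(x^{k-1}-1)/(x-1)$, and you then replace the proof by a plan: an induction on the run structure of $r$, or the much stronger unproven assertion that every root of $\tilde{R}$ is a root of unity. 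None of that is carried out, and it is exactly the hard half of the statement; as written, the proposal does not establish the theorem.

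This is precisely the point where the paper stops computing and cites Konvalina and Matache \cite{poly}: the factor $(n-1)\tilde{R}(x)$ is a palindromic polynomial whose coefficients, up to the nonzero scalar $n-1$, are $0$'s and $1$'s with nonzero constant term, and the theorem of \cite{poly} guarantees that such a palindrome-polynomial has a root on the unit circle --- it handles odd degree (where $-1$ is automatically a root, as you found) and even degree (via a mild condition comparing the middle coefficient to the constant one, which a $0$--$1$ palindrome satisfies trivially). So the quickest repair of your write-up is to invoke \cite{poly} at the moment you introduce $Q$; a self-contained alternative would require actually executing your run-length induction, which you should expect to be genuinely nontrivial. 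In particular, do not lean, even informally, on the claim that all roots of $\tilde{R}$ are roots of unity: it is stronger than anything needed, and nothing you have written supports it.
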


\begin{proof}
By Theorem 11 in \cite{holt}, $\mbox{Pal}(x)=(x-b)\sum_{j=1}^{k}c_jx^{j-1}$,
and since this palinomial is induced by a 1089 palintiple, 
we have by Theorem 14 in \cite{kendrick_1} that $n+1$ divides $b$.
Hence, by Theorem 6 in \cite{holt} and Theorem 14 in \cite{kendrick_1}, $c_j=c_{k-j}$ is either 0 or $n-1$. 
Our palinomial then has the form $\mbox{Pal}(x)=(n-1)(x-b)\sum_{j=1}^{k}r_jx^{j-1}$,
where $r_k, r_{k-1},\ldots, r_0$ is a palindromic binary sequence such that $r_1=r_{k-1}=1$,
and there are no isolated zeros or ones except $r_0=r_k=0$ as already argued.
Corollary 1 of \cite{poly} proves that any palindrome polynomial with coefficients which are either
0 or 1 always has a unimodular root, and this result establishes our claim.  
\end{proof}

The next theorem reveals an even closer connection between the digits of 1089 and shifted-symmetric palintiples
and the roots of their palinomials.

\begin{theorem}
 Let $\xi \neq b$ be a non-zero root of the palinomial induced by a 1089 or shifted-symmetric palintiple 
 $(d_k, d_{k-1},\ldots, d_0)_b$.
 Then $\xi$ is a root of both the digit and reverse-digit polynomials. That is,
 $$\sum_{j=0}^n d_j \xi^j=\sum_{j=0}^n d_{k-j} \xi^j=0.$$
 \label{dig_poly}
\end{theorem}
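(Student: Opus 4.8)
The plan is to exploit the reciprocal (palindromic) structure of the factor $\sum_{j=1}^k c_j x^{j-1}$ that appears in the palinomial. Write $F(x)=\sum_{j=0}^k d_j x^j$ for the digit polynomial and $G(x)=\sum_{j=0}^k d_{k-j}x^j$ for the reverse-digit polynomial, so that $\mbox{Pal}(x)=F(x)-nG(x)$, and note that the two are reciprocals of one another: $G(x)=x^k F(1/x)$ and $F(x)=x^k G(1/x)$. The defining relation of the palintiple gives $F(b)=nG(b)$, which is exactly why $x=b$ is the trivial root of $\mbox{Pal}$; the hypothesis discards it. From $\mbox{Pal}(\xi)=0$ I immediately obtain the first relation $F(\xi)=nG(\xi)$. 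The entire proof then reduces to producing the \emph{dual} relation $G(\xi)=nF(\xi)$: once both hold, substitution yields $F(\xi)=n^2 F(\xi)$, and since $n>1$ forces $n^2-1\neq 0$, this gives $F(\xi)=0$ and hence $G(\xi)=0$, which is the assertion.

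Next I would invoke the factorization $\mbox{Pal}(x)=(x-b)C(x)$ with $C(x)=\sum_{j=1}^k c_j x^{j-1}$, established in the preceding theorem and in \cite{holt} (it follows from Equation \ref{fund} together with $c_0=c_{k+1}=0$, so it is valid for any palintiple). Because $\xi\neq b$, the hypothesis $\mbox{Pal}(\xi)=0$ forces $C(\xi)=0$. The crux is to show that $C$ is self-reciprocal, so that its nonzero roots come in pairs $\xi,1/\xi$. Here the two cases of the hypothesis enter: for a 1089 palintiple one has $c_j=c_{k-j}$ with $c_0=c_k=0$, while for a shifted-symmetric palintiple one has $c_j=c_{k-j+1}$ with $c_0=c_{k+1}=0$ but $c_k=c_1$ possibly nonzero. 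A direct reindexing of $C$ then shows that in the first case $x^{k-2}C(1/x)=C(x)$ and in the second $x^{k-1}C(1/x)=C(x)$; either way $C$ is palindromic, so $C(\xi)=0$ together with $\xi\neq 0$ gives $C(1/\xi)=0$.

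Finally I would feed $1/\xi$ back into the palinomial. Since $\mbox{Pal}(1/\xi)=(1/\xi-b)C(1/\xi)=0$, the point $1/\xi$ is again a root of $\mbox{Pal}$, whence $F(1/\xi)=nG(1/\xi)$. Multiplying through by $\xi^k$ and substituting the reciprocal identities $G(\xi)=\xi^k F(1/\xi)$ and $F(\xi)=\xi^k G(1/\xi)$ converts this into precisely the sought dual relation $G(\xi)=nF(\xi)$. Combined with $F(\xi)=nG(\xi)$ and $n^2\neq 1$ as above, the claim $F(\xi)=G(\xi)=0$ follows.

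I expect the main obstacle to be the bookkeeping that establishes palindromicity of $C$ uniformly across the two cases. The carry symmetry is centered differently for symmetric ($c_j=c_{k-j}$) versus shifted-symmetric ($c_j=c_{k-j+1}$) palintiples, and the boundary carries behave differently: in particular $c_k$ vanishes in the 1089 case but need not in the shifted-symmetric case, which shifts both the degree and the center of the palindrome. Once one is careful that $c_0=c_{k+1}=0$ and tracks these endpoints correctly, the reciprocity of $C$, and with it the pairing $\xi\leftrightarrow 1/\xi$ of its roots, drops out, and the remainder of the argument is purely formal.
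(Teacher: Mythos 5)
Your proposal is correct and follows essentially the same route as the paper's proof: both exploit the palindromicity of the carry factor $\sum_{j=1}^{k}c_jx^{j-1}$ to conclude $\mbox{Pal}(1/\xi)=0$, then combine $F(\xi)=nG(\xi)$ with the $\xi^k$-multiplied, re-indexed relation $G(\xi)=nF(\xi)$ to force $F(\xi)=n^2F(\xi)$ and hence $F(\xi)=G(\xi)=0$. The only difference is that you spell out what the paper leaves implicit, namely the factorization $\mbox{Pal}(x)=(x-b)C(x)$ and the separate verification that $C$ is self-reciprocal in the symmetric and shifted-symmetric cases.
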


\begin{proof}
 By the theorem hypothesis, $\sum_{j=1}^{k}c_jx^{j-1}$ is a palindromic polynomial.
 Therefore, $\mbox{Pal}(\xi)=\mbox{Pal}(\frac{1}{\xi})=0$.
 It follows that both $\sum_{j=0}^{k}d_j \xi^j=n\sum_{j=0}^{k}d_{k-j}\xi^j$ and 
 $\sum_{j=0}^{k}d_j \xi^{-j}=n\sum_{j=0}^{k}d_{k-j}\xi^{-j}$. 
 Multiplying the second equation by $n \xi^k$ and reindexing the sum, we have
 $n\sum_{j=0}^{k}d_{k-j} \xi^j=n^2\sum_{j=0}^{k}d_{j}\xi^j$. 
 Hence, $\sum_{j=0}^{k}d_j \xi^j=n^2\sum_{j=0}^{k}d_{j}\xi^j$,
 or $(n^2-1)\sum_{j=0}^{k}d_{j}\xi^j=0$,
 so that $\xi$ is a zero of the forward-digit polynomial.
 The reverse-digit case then follows from the above relation:
 $n\sum_{j=0}^{k}d_{k-j}\xi^j=\sum_{j=0}^{k}d_j \xi^j=0$.
 
\end{proof}

\begin{corollary} 
Digit and reverse-digit polynomials of 1089 palintiples have at least one root on the unit circle. 
\end{corollary}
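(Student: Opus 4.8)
The plan is to chain together the two theorems of this section, using the first to produce a distinguished root and the second to transfer it to the digit and reverse-digit polynomials. Concretely, the preceding theorem guarantees that the palinomial induced by a 1089 palintiple has at least one root $\xi$ with $|\xi|=1$. The key observation is that any such $\xi$ automatically meets the two hypotheses of Theorem \ref{dig_poly}: because $|\xi|=1$ we have $\xi \neq 0$, and because the base satisfies $b>1$ (so $|b|>1$) we have $\xi \neq b$. Thus $\xi$ is a non-zero root of the palinomial distinct from $b$, exactly the situation Theorem \ref{dig_poly} is designed to handle.

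Having verified the hypotheses, I would then apply Theorem \ref{dig_poly} verbatim. It asserts that every such $\xi$ satisfies $\sum_{j=0}^{k} d_j \xi^j = \sum_{j=0}^{k} d_{k-j}\xi^j = 0$, so the single unit-circle point $\xi$ is a common root of both the digit polynomial and the reverse-digit polynomial. Since $\xi$ lies on the unit circle by construction, this immediately exhibits the desired unit-circle root for each of the two polynomials, establishing the corollary.

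The argument is essentially pure bookkeeping, and the only point demanding any attention is confirming that the root supplied by the earlier theorem genuinely avoids the two excluded values $0$ and $b$ required by Theorem \ref{dig_poly}. Both exclusions follow at once from $|\xi|=1$ together with the standing assumption $b>1$, so I do not anticipate a substantive obstacle. If anything, the subtlest stylistic choice is simply whether to state the two exclusions explicitly or to fold them silently into the phrase ``a unit-circle root''; I would spell them out, since they are precisely the conditions that license the invocation of Theorem \ref{dig_poly}.
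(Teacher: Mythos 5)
Your proposal is correct and is exactly the argument the paper intends: the corollary is stated without proof as an immediate consequence of chaining the section's first theorem (a unit-circle root of the palinomial) with Theorem \ref{dig_poly} (transfer of any nonzero root $\xi \neq b$ to the digit and reverse-digit polynomials). Your explicit verification that $|\xi|=1$ rules out $\xi=0$ and $\xi=b$ is the only detail the paper leaves silent, and it is handled correctly.
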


\subsection{Additional Roots of Digit Polynomials of 1089 and Shifted-Symmetric Palintiples}

By Theorem \ref{dig_poly}, every negative or purely complex root of 
the $(k-1)$st-degree palinomial induced by a $(k+1)$-digit 1089 palintiple
is also a root of the digit polynomial (the degree is $k-1$ since $c_k=c_0=0$ by palintiple symmetry).
Thus, both the digit and reverse-digit polynomial of a 
1089 palintiple have two additional roots more than their corresponding palinomial. 

\begin{theorem}
Let $\mbox{Pal}(x)$ be the palinomial induced by 
a 1089 $(n,b)$-palintiple $(d_k, d_{k-1},\ldots, d_0)_b$, and let
$D$ and $\overline{D}$ denote the digit and reverse-digit polynomials, respectively. 
Then $$D(x)=(d_k x^2-x+d_0)\frac{\mbox{Pal}(x)}{(n-1)(x-b)} 
\mbox{ and } \overline{D}(x)=(d_0 x^2-x+d_k)\frac{\mbox{Pal}(x)}{(n-1)(x-b)}.$$
\end{theorem}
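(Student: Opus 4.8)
The plan is to express both the digit polynomial $D(x)=\sum_{j=0}^k d_j x^j$ and the reverse-digit polynomial $\overline{D}(x)=\sum_{j=0}^k d_{k-j}x^j$ directly in terms of the carry polynomial $C(x):=\sum_{j=1}^k c_j x^{j-1}$, since the preceding results already give $\mbox{Pal}(x)=(x-b)C(x)$, hence $\mbox{Pal}(x)/(x-b)=C(x)$. Because $p$ is a 1089 palintiple, its carries are symmetric, $c_j=c_{k-j}$, with the trivial values $c_0=c_k=0$ and the convention $c_{-1}=c_{k+1}=0$; substituting $c_{k-j+1}=c_{j-1}$ and $c_{k-j}=c_j$ into the fundamental relation (Equation \ref{fund}) collapses it to $d_j=\frac{nb\,c_{j-1}+b\,c_{j+1}-(n+1)c_j}{n^2-1}$, valid for every $0\le j\le k$.

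First I would substitute this closed form into $D(x)=\sum_{j=0}^k d_j x^j$ and carry out three index shifts. Using $c_0=c_k=0$ together with the vanishing virtual carries at the two ends, the shifted sums reproduce $C(x)$ exactly: $\sum_j c_{j-1}x^j=x^2C(x)$, $\sum_j c_{j+1}x^j=C(x)$, and $\sum_j c_j x^j=xC(x)$. Collecting terms gives $D(x)=\frac{C(x)}{n^2-1}\bigl(nb\,x^2-(n+1)x+b\bigr)$. Now I would invoke the values $d_0=b/(n+1)$ and $d_k=nb/(n+1)$ recorded earlier for 1089 palintiples to factor $nb\,x^2-(n+1)x+b=(n+1)(d_k x^2-x+d_0)$, which, after cancelling $(n+1)$ against $n^2-1=(n-1)(n+1)$, yields $D(x)=(d_k x^2-x+d_0)\frac{C(x)}{n-1}=(d_k x^2-x+d_0)\frac{\mbox{Pal}(x)}{(n-1)(x-b)}$, the first claimed identity.

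For the reverse-digit polynomial I would use the elementary identity $\mbox{Pal}(x)=D(x)-n\overline{D}(x)$, which follows at once from the definition of the palinomial. Solving for $\overline{D}(x)=\frac{1}{n}\bigl(D(x)-\mbox{Pal}(x)\bigr)$ and substituting the formula just obtained for $D(x)$ together with $\mbox{Pal}(x)=(x-b)C(x)$, the claim reduces to the polynomial identity $\frac{d_k x^2-x+d_0}{n-1}-(x-b)=\frac{n(d_0 x^2-x+d_k)}{n-1}$, which one checks coefficientwise using the relations $d_k=nd_0$ and $d_0+(n-1)b=nd_k$ that hold for 1089 palintiples. Alternatively one may simply repeat the computation of the second paragraph with the roles of $c_{j-1}$ and $c_{j+1}$ interchanged, producing $\overline{D}(x)=\frac{C(x)}{n^2-1}\bigl(b\,x^2-(n+1)x+nb\bigr)$ and the desired factorization.

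I expect the only delicate step to be the boundary bookkeeping in the index shifts of the middle paragraph: one must confirm that the trivial carries $c_0=c_k=0$ and the virtual carries $c_{-1}=c_{k+1}=0$ all drop out so that each shifted sum is precisely a monomial multiple of $C(x)$, with no stray endpoint contributions. Once this is verified the two factorizations are purely algebraic, and both formulas are immediate consequences of the single computation of $D(x)$.
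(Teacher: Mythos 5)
Your proof is correct, but it takes a genuinely different route from the paper's. The paper works backwards from factorizations: invoking Theorem \ref{dig_poly}, it writes $D(x)=d_k(x-\omega_1)(x-\omega_2)\prod_{j=1}^{k-2}(x-\xi_j)$ and $\overline{D}(x)=d_0(x-\tfrac{1}{\omega_1})(x-\tfrac{1}{\omega_2})\prod_{j=1}^{k-2}(x-\xi_j)$, where the $\xi_j$ are the roots of $\mbox{Pal}(x)$ other than $b$, and then pins down the two unknown roots by evaluating at $x=1$ (no palinomial has $1$ as a root, which yields $\omega_1\omega_2=\tfrac{1}{n}$) and at $x=b$ (the palintiple relation $D(b)=n\overline{D}(b)$, which yields $\omega_1+\omega_2=\tfrac{1}{nq}$), whence $d_k(x-\omega_1)(x-\omega_2)=d_kx^2-x+d_0$. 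You instead compute forwards: substituting the carry symmetry $c_j=c_{k-j}$ into Equation \ref{fund} and shifting indices expresses $D$ and $\overline{D}$ as explicit multiples of the carry polynomial $C(x)=\mbox{Pal}(x)/(x-b)$, namely $\tfrac{C(x)}{n^2-1}\left(nbx^2-(n+1)x+b\right)$ and $\tfrac{C(x)}{n^2-1}\left(bx^2-(n+1)x+nb\right)$, after which $d_0=q$ and $d_k=nq$ finish the job; your boundary bookkeeping ($c_0=c_k=0$ and the vanishing virtual carries) is exactly the right thing to check, and both of your routes to $\overline{D}$ (via $\mbox{Pal}=D-n\overline{D}$, or by rerunning the computation) are sound. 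What your approach buys: it is self-contained and purely computational, needs neither Theorem \ref{dig_poly} nor the fact that $1$ is never a palinomial root, and it quietly avoids a subtlety the paper's argument glosses over --- writing $\prod_{j=1}^{k-2}(x-\xi_j)$ as a factor of $D$ tacitly assumes each shared root divides $D$ with the correct multiplicity, which Theorem \ref{dig_poly} only guarantees when the $\xi_j$ are distinct. What the paper's approach buys: given the earlier theorem it is shorter, and it explicitly identifies the two extra roots as the conjugate pair $\tfrac{1}{2nq}\left(1\pm i\sqrt{4q^2n-1}\right)$, information relevant to the section's theme of locating roots relative to the unit circle, which your derivation leaves implicit in the quadratic factor.
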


\begin{proof}
Suppose $\mbox{Pal}(x)=(n-1)(x-b)\prod_{j=1}^{k-2}(x-\xi_j)$ is a palinomial induced by a 1089 (symmetric) $(n,b)$-palintiple.
By Theorem \ref{dig_poly}, we may express these as $D(x)=d_k(x-\omega_1)(x-\omega_2) \prod_{j=1}^{k-2}(x-\xi_j)$
and $\overline{D}(x)=d_0(x-\frac{1}{\omega_1})(x-\frac{1}{\omega_2}) \prod_{j=1}^{k-2}(x-\xi_j)$,
where $\omega_1$ and $\omega_2$ are the two extra roots. By Corollary 12 in \cite{holt}, the only 
positive real root of a palinomial is $b$. Thus, $x=1$ 
cannot be a root of any palinomial and is clearly not a root of $D$ or $\overline{D}$.  
Thus, since $D(1)=\overline{D}(1)$, and since $d_0=q$ and $d_k=nq$ for any 1089 palintiple, we have that
$n(1-\omega_1)(1-\omega_2)=(1-\frac{1}{\omega_1})(1-\frac{1}{\omega_2})$. Then
$n(1-\omega_1)(1-\omega_2)=\frac{\omega_1-1}{\omega_1}\frac{\omega_2-1}{\omega_2}$, so that
after cancelling common factors we have $\omega_1 \omega_2=\frac{1}{n}$.
Now, $D(b)=n\overline{D}(b)$ implies $d_k(b-\omega_1)(b-\omega_2)=n d_0(b-\frac{1}{\omega_1})(b-\frac{1}{\omega_2})$,
so that by the same reasoning as above, $(b-\omega_1)(b-\omega_2)=(b-\frac{1}{\omega_1})(b-\frac{1}{\omega_2})$.
Expanding both sides, we then have 
$\omega_1\omega_2-b\omega_1-b\omega_2=\frac{1}{\omega_1 \omega_2}-\frac{b}{\omega_1}-\frac{b}{\omega_2}$, which
after rearranging becomes
$\omega_1 \omega_2-b(\omega_1+\omega_2)=\frac{1}{\omega_1 \omega_2}-b\frac{\omega_1+\omega_2}{\omega_1\omega_2}$.
Thus, since $\omega_1\omega_2=\frac{1}{n}$, as demonstrated above, the previous statement becomes
$\frac{1}{n}-b(\omega_1+\omega_2)=n-nb(\omega_1+\omega_2)$. Solving for $\omega_1+\omega_2$, we obtain
$\omega_1+\omega_2=\frac{n-\frac{1}{n}}{nb-b}$,
which, since $b=q(n+1)$, simplifies to $\omega_1+\omega_2=\frac{1}{nq}$. 
We again cite the fact that the only positive real root of a palinomial is $b$, and since $\omega_1$ and $\omega_2$
add to a positive number, we conclude that these must be complex. 
Furthermore, since all other conjugate pairs of $\mbox{Pal}(x)$ were cancelled in the above calculations,
the only conclusion is that $\omega_1$ and $\omega_2$ are conjugate.
Suppose then that $\omega_1=x+iy$ and $\omega_2=x-iy$. Then $2x=\omega_1+\omega_2=\frac{1}{n}$, so that
the real part of both roots is $\frac{1}{2nq}$. Now, since $\omega_1\omega_2=\frac{1}{n}$,
we have $(\frac{1}{2qn}+iy)(\frac{1}{2qn}-iy)=\frac{1}{n}$, which implies $y=\pm\sqrt{\frac{1}{n}-\frac{1}{4n^2q^2}}$.
It is then a straightforward calculation to determine that $\omega_1$ and $\omega_2$ are
the conjugate pair $\frac{1}{2nq}(1 \pm i\sqrt{4q^2n-1})$.
The digit and reverse-digit polynomials may then be expressed as 
$D(x)=(d_k x^2-x+d_0)\prod_{j=1}^{k-2}(x-\xi_j)$ and
$\overline{D}(x)=(d_0 x^2-x+d_k)\prod_{j=1}^{k-2}(x-\xi_j)$.
\end{proof}

Another application of Theorem \ref{dig_poly} shows that
every negative or purely complex root of the $k$th-degree palinomial induced by a $(k+1)$-digit shifted-symmetric palintiple
is also a root of the digit polynomial (the degree is $k$ since $c_k=c_1 \neq 0$ by shifted-symmetry).
Thus, both the digit and reverse-digit polynomial of a 
shifted-symmetric palintiple have one more root than their corresponding palinomial.  

\begin{theorem}
Let $\mbox{Pal}(x)$ be the palinomial induced by a shifted-symmetric $(n,b)$-palintiple 
$(d_k, d_{k-1},\ldots, d_0)_b$ with carries $c_k, c_{k-1}, \ldots, c_1, c_0$,  and let
$D$ and $\overline{D}$ denote the digit and reverse-digit polynomials, respectively. 
Then $$D(x)=(d_k x + d_0)\frac{\mbox{Pal}(x)}{c_k (x-b)} 
\mbox{ and } \overline{D}(x)=(d_0 x + d_k)\frac{\mbox{Pal}(x)}{c_k(x-b)}.$$
\end{theorem}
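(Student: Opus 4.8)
The plan is to reduce both stated factorizations to a single formal polynomial identity and then verify that identity by a direct coefficient computation, rather than by the root-matching argument used for the 1089 case. Throughout I write $C(x)=\sum_{j=1}^{k}c_j x^{j-1}$, so that by the factorization $\mbox{Pal}(x)=(x-b)\sum_{j=1}^{k}c_j x^{j-1}$ from \cite{holt} we have $\mbox{Pal}(x)=(x-b)C(x)$ and the two target equations collapse to $c_k D(x)=(d_k x+d_0)C(x)$ and $c_k\overline{D}(x)=(d_0 x+d_k)C(x)$. Because $p$ is shifted-symmetric, the coefficient sequence $(c_1,\dots,c_k)$ of $C$ is palindromic, so $C$ has degree $k-1$ with leading coefficient $c_k$; comparing the leading term of $(x-b)C(x)$ with the definition of $\mbox{Pal}$ yields $d_k-nd_0=c_k$, which in particular certifies $c_k\neq 0$ so that the quotients appearing in the statement are well defined.

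First I would record the shifted-symmetric digit-carry relation. Substituting $c_{k-j+1}=c_j$ and $c_{k-j}=c_{j+1}$ into Equation \ref{fund} gives $d_j=\frac{(nb-1)c_j+(b-n)c_{j+1}}{n^2-1}$, exactly the expression already used in Section 4. I would then form $(n^2-1)D(x)=(nb-1)\sum_{j=0}^{k}c_j x^j+(b-n)\sum_{j=0}^{k}c_{j+1}x^j$ and evaluate the two sums using the boundary conventions $c_0=0$ and $c_{k+1}=0$: the first sum is $xC(x)$ and the second is $C(x)$. Hence $(n^2-1)D(x)=\big[(nb-1)x+(b-n)\big]C(x)$.

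The last step is to recognize the linear factor. From the boundary values $d_k=\frac{(nb-1)c_k}{n^2-1}$ and $d_0=\frac{(b-n)c_1}{n^2-1}=\frac{(b-n)c_k}{n^2-1}$ (using $c_1=c_k$), the scalars $\frac{nb-1}{n^2-1}$ and $\frac{b-n}{n^2-1}$ are precisely $\frac{d_k}{c_k}$ and $\frac{d_0}{c_k}$. This converts the displayed identity into $D(x)=\frac{1}{c_k}(d_k x+d_0)C(x)=(d_k x+d_0)\frac{\mbox{Pal}(x)}{c_k(x-b)}$. The reverse-digit polynomial is handled identically: writing $\overline{D}(x)=\sum_{j=0}^{k} d_{k-j}x^j$ and substituting $c_{k-j}=c_{j+1}$, $c_{k-j+1}=c_j$ shows the two sums exchange roles, giving $(n^2-1)\overline{D}(x)=\big[(b-n)x+(nb-1)\big]C(x)$ and therefore $\overline{D}(x)=(d_0 x+d_k)\frac{\mbox{Pal}(x)}{c_k(x-b)}$.

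The only point demanding care — the main obstacle — is the bookkeeping at the two ends of the carry sequence: the reindexing that turns $\sum c_{j+1}x^j$ into $C(x)$ and $\sum c_j x^j$ into $xC(x)$ relies on $c_0=0$ and $c_{k+1}=0$, and the identification of the linear factor relies on the shifted-symmetry value $c_1=c_k$ together with $c_k\neq 0$. I would favor this direct route over the factorization/root-counting argument used for the 1089 case precisely because it sidesteps tracking multiplicities of the shared roots of $C$ and the degenerate possibility that $b$ is itself a root of $C$; the conclusion is obtained as an equality of polynomials, so no such case analysis is required.
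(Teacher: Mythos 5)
Your proof is correct, but it takes a genuinely different route from the paper's. The paper argues through roots: it writes $\mbox{Pal}(x)=c_k(x-b)\prod_{j=1}^{k-1}(x-\xi_j)$, invokes Theorem \ref{dig_poly} to conclude that $D$ and $\overline{D}$ absorb the roots $\xi_j$ and hence each differs from $\mbox{Pal}$ by exactly one extra root ($\omega$ and $1/\omega$ respectively), and then pins down $\omega=-\frac{b-n}{nb-1}$ from the single evaluation $D(1)=\overline{D}(1)$. You instead prove the polynomial identity $(n^2-1)D(x)=\left[(nb-1)x+(b-n)\right]C(x)$, with $C(x)=\sum_{j=1}^{k}c_jx^{j-1}$, by summing the shifted-symmetric digit--carry relation $d_j=\frac{(nb-1)c_j+(b-n)c_{j+1}}{n^2-1}$ against powers of $x$, using only the boundary conventions $c_0=c_{k+1}=0$ and the shifted-symmetry value $c_1=c_k$. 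Your route buys rigor and self-containedness: it needs no appeal to Theorem \ref{dig_poly}, no degree or multiplicity bookkeeping (the paper's phrase ``have one root that differs'' silently assumes the $\xi_j$ are shared with correct multiplicities and that none of them is $b$ or $0$), and no use of the reciprocal-closure of the roots of the palindromic factor, which is what actually justifies the paper's claim that the extra root of $\overline{D}$ is the reciprocal of that of $D$. What the paper's route buys is uniformity: it reuses the same root-matching template as the preceding theorem on 1089 palintiples, whereas your computation exploits the especially simple two-term form of $d_j$ that is available only in the shifted-symmetric case.

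One small repair is needed. Your parenthetical claim that the leading-coefficient identity $d_k-nd_0=c_k$ ``certifies'' $c_k\neq 0$ is circular: that identity alone does not exclude $d_k=nd_0$ (symmetric palintiples such as $(8,7,1,2)_{10}$ realize exactly this). Since you divide by $c_k$ when converting $\frac{nb-1}{n^2-1}$ and $\frac{b-n}{n^2-1}$ into $\frac{d_k}{c_k}$ and $\frac{d_0}{c_k}$, you should instead note that $d_0=\frac{(b-n)c_1}{n^2-1}\neq 0$ forces $c_1\neq 0$, and $c_1=c_k$ by shifted symmetry; this one line closes the gap.
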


\begin{proof}
Suppose $\mbox{Pal}(x)=c_k(x-b)\prod_{j=1}^{k-1}(x-\xi_j)$.
Since the digit and reverse-digit polynomials have one more root $\omega$ than $\mbox{Pal}(x)$, we have that
$D(x)=d_k(x-\omega) \prod_{j=1}^{k-1}(x-\xi_j)$ and $\overline{D}(x)=d_0(x-\frac{1}{\omega})\prod_{j=1}^{k-1}(x-\xi_j)$.
Thus, since $x=1$ cannot be the root of any palinomial as argued in the proof of the previous theorem, 
it follows from the fact that $D(1)=\overline{D}(1)$ that $d_k(1-\omega)=d_0(1-\frac{1}{\omega})$ 
after cancelling common factors. Then, multiplying by $\omega$, we have $d_k \omega (1-\omega)=d_0(\omega-1)$, which implies
$\omega=-\frac{d_0}{d_k}$.
By Theorem 9 of \cite{holt}, we have that $d_k=\frac{(nb-1)c_1}{n^2-1}$ and $d_0=\frac{(b-n)c_1}{n^2-1}$,
so that $\omega=-\frac{b-n}{nb-1}$.
Hence, $D(x)=(d_k x+d_0) \prod_{j=1}^{k-1}(x-\xi_j)$ and $\overline{D}(x)=(d_0 x+d_k) \prod_{j=1}^{k-1}(x-\xi_j)$.
\end{proof}

\begin{corollary}
Let $\widehat{\mbox{Pal}}(x)$ be the palinomial induced by a singly-derived or doubly-derived 
$(\hat{n},\hat{b})$-palintiple $\hat{p}$ constructed from an $(n,b)$-palintiple $p=(d_k, d_{k-1},\ldots, d_0)_b$, 
and let $\mbox{Pal}(x)$ be the palinomial induced by $p$. Then
$$\widehat{\mbox{Pal}}(x)=(x-\hat{b})(d_k x^2-x+d_0)\frac{\mbox{Pal}(x)}{(n-1)(x-b)} $$
if $p$ is a 1089 palintiple, and
$$\widehat{\mbox{Pal}}(x)=(x-\hat{b})(d_k x+d_0)\frac{\mbox{Pal}(x)}{c_k(x-b)} $$
if $p$ is shifted-symmetric, where $c_k$ is the $k$th carry of $p$.
\end{corollary}

\begin{proof}
If $\hat{p}$ is singly-derived, its carries are $d_k$, $d_{k-1}, \ldots, d_0, 0$, 
so that by Theorem 11 in \cite{holt}, we have
$
\widehat{\mbox{Pal}}(x)
=(x-\hat{b})\sum_{j=1}^{k+1}\hat{c}_j x^{j-1}
=(x-\hat{b})\sum_{j=1}^{k+1}d_{j-1} x^{j-1}
=(x-\hat{b})D(x).
$ 
The doubly-derived case follows in a similar fashion. 
\end{proof}

\begin{corollary}
Let $\widehat{\mbox{Pal}}(x)$ be the palinomial induced by a singly-$\rho$-derived or doubly-$\rho$-derived 
$(\hat{n},\hat{b})$-palintiple $\hat{p}$
constructed from an $(n,b)$-palintiple $p=(d_k, d_{k-1},\ldots, d_0)_b$,
and let $\mbox{Pal}(x)$ be the palinomial induced by $p$. Then
$$\widehat{\mbox{Pal}}(x)=(x-\hat{b})(d_0 x^2-x+d_k)\frac{\mbox{Pal}(x)}{(n-1)(x-b)} $$
if $p$ is a 1089 palintiple, and
$$\widehat{\mbox{Pal}}(x)=(x-\hat{b})(d_0 x+d_k)\frac{\mbox{Pal}(x)}{c_k(x-b)} $$
if $p$ is shifted-symmetric, where $c_k$ is the $k$th carry of $p$.
\end{corollary}

\begin{corollary} 
Palinomials induced by Hoey and $\rho$-Hoey palintiples have at least one root on the unit circle. 
\end{corollary}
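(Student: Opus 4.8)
The plan is to piggyback on the factorization corollaries for derived palinomials that were just established, together with the earlier fact that the digit and reverse-digit polynomials of a $1089$ palintiple already carry a root on the unit circle. Since a Hoey palintiple is by definition singly-derived from a $1089$ $(n,b)$-palintiple $p=(d_k,\ldots,d_0)_b$ and a $\rho$-Hoey palintiple is singly-$\rho$-derived from such a $p$, the two preceding corollaries apply verbatim and give, in the $1089$ case,
$$\widehat{\mbox{Pal}}(x)=(x-\hat b)(d_k x^2-x+d_0)\frac{\mbox{Pal}(x)}{(n-1)(x-b)}$$
for a Hoey palintiple, and
$$\widehat{\mbox{Pal}}(x)=(x-\hat b)(d_0 x^2-x+d_k)\frac{\mbox{Pal}(x)}{(n-1)(x-b)}$$
for a $\rho$-Hoey palintiple.

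The crucial observation is that the product of the second and third factors is exactly the digit polynomial $D(x)$ in the first case and the reverse-digit polynomial $\overline D(x)$ in the second, by the theorem that factors $D$ and $\overline D$ of a $1089$ palintiple. Thus I would rewrite the above simply as $\widehat{\mbox{Pal}}(x)=(x-\hat b)D(x)$ for a Hoey palintiple and $\widehat{\mbox{Pal}}(x)=(x-\hat b)\overline D(x)$ for a $\rho$-Hoey palintiple. In particular every root of $D$, respectively of $\overline D$, is inherited as a root of $\widehat{\mbox{Pal}}$.

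Finally I would invoke the corollary asserting that the digit and reverse-digit polynomials of a $1089$ palintiple possess at least one root $\omega$ with $|\omega|=1$. Selecting such a root of $D$ in the Hoey case, or of $\overline D$ in the $\rho$-Hoey case, the displayed factorization yields $\widehat{\mbox{Pal}}(\omega)=0$ with $\omega$ on the unit circle, which is precisely the claim.

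Honestly I do not expect a genuine obstacle here: the argument is a short bookkeeping step once the factorization corollaries and the digit-polynomial unit-circle corollary are in place. The only point meriting a sentence of verification is that the extra linear factor $x-\hat b$ cannot disturb the inherited root, and this is immediate since multiplying by $x-\hat b$ only introduces the additional root $x=\hat b$, which lies off the unit circle because $\hat b$ is a base and hence exceeds $1$; the unit-circle root $\omega$ of $D$ or $\overline D$ therefore persists in $\widehat{\mbox{Pal}}$.
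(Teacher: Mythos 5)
Your proposal is correct and follows exactly the route the paper intends: the paper leaves this corollary unproved precisely because it is immediate from the preceding factorization corollaries, which give $\widehat{\mbox{Pal}}(x)=(x-\hat{b})D(x)$ for Hoey and $\widehat{\mbox{Pal}}(x)=(x-\hat{b})\overline{D}(x)$ for $\rho$-Hoey palintiples, combined with the earlier corollary that $D$ and $\overline{D}$ of a 1089 palintiple have a root on the unit circle. Your extra remark about the factor $x-\hat{b}$ is harmless but unnecessary, since multiplying by an additional factor can never destroy an existing root.
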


\begin{corollary}
Palinomials induced by any two Hoey palintiples derived from a common palintiple differ only by a linear factor. 
\label{linear_factor}
\end{corollary}

The statement of Corollary \ref{linear_factor} also holds for $\rho$-Hoey, Sutcliffe, $\rho$-Sutcliffe, 
Pudwell, and $\rho$-Pudwell palintiples.

\begin{example}
The 7-digit 1089 $(4,10)$-palintiple $p=(8,7,9,9,9,1,2)_{10}$ 
induces the palinomial $\mbox{Pal}(x)=3(x-10)(x^4+x^3+x^2+x+1)$.
The reader may also verify that $D(x)=(8 x^2-x+2)(x^4+x^3+x^2+x+1)$ 
and $\overline{D}(x)=(2x^2-x+8)(x^4+x^3+x^2+x+1)$.
Moreover, constructing a new 8-digit palintiple from $p$ using Theorem \ref{sd_pals} and its supporting arguments, 
we take the $(10,139)$-palintiple $\hat{p}=(28,25,136,138,138,110,113,2)_{139}$ as an example.
The reader may verify that the palinomial induced by $\hat{p}$ can be expressed as
$$
\widehat{\mbox{Pal}}(x)=(x-139)(8 x^2-x+2)(x^4+x^3+x^2+x+1).
$$
\end{example}

\section{Open Questions and Future Work}

It is still unknown if Theorems \ref{sd_pals} and \ref{sd_pals_rev} and their arguments 
give us all Hoey and $\rho$-Hoey palintiples, respectively.
So far this seems to be the case, but remains unproven. 
On the  other hand, however, Sutcliffe and $\rho$-Sutcliffe palintiples exist under conditions for which 
Theorems \ref{ssd1_pals}, \ref{ssd1_pals_rev}, and their corollaries do not apply. 
We give as an example the $(14,129)$-palintiple $(37, 89, 2)_{129}$ with carries $(9,4,0)$, 
which is derived from the $(2,14)$-palintiple $(9,4)_{14}$; for this particular case, $s=\frac{\hat{n}(nb-1)}{b-n}$ is not an integer.
Furthermore, we have already stated that it is unknown if there are Pudwell palintiples such that $\hat{n} \neq b$. 
However, we must point out that $\rho$-Pudwell palintiples do exist for values of $\hat{n}$ other than $b$. 
As an example, we present the $(34,55)$-palintiple $(34, 1, 0, 1)_{55}$ with carries $(0,1,21,0)$, which is derived
from the $(11,23)$-palintiple $(21,1)_{23}$. Moreover, for whatever reason, $\rho$-Pudwell palintiples,
at least for lower bases, seem to occur much more frequently than their forward counterparts.
Both Pudwell and $\rho$-Pudwell palintiples so far have proven to be the least well-understood. 
In summary, finding maximal conditions for the existence of palintiples belonging to the 
families presented in this paper is an open topic.

Given the variety of Young graph isomorphism classes \cite{kendrick_1,kendrick_2},
it is not surprising that not all asymmetric palintiples are derived palintiples. 
If we consider the example of the $(4,23)$-palintiple $(6,15,1)_{23}$ with carries $(c_2,c_1,c_0)=(2,1,0)$,
it is not difficult to show that no 2-digit palintiple has these carries as digits.
We might then ask if there is a more general principle at work here; 
perhaps the carries are not the digits of a palintiple, but rather, the digits of a \textit{permutiple}. 
Indeed, one sees that for the above case
that $(2, 1, 0)_4 = 2 \cdot (1, 0, 2)_4$. 
While such examples are promising, one can verify that for the $(11,17)$-palintiple $(14, 12, 5, 1)_{17}$, 
there is no permutation, base, or multiplier for which the carries 
$(c_3,c_2,c_1,c_0)=(3, 8, 9, 0)$ are a non-trivial permutiple. 
However, we do point out that there do seem to be strong connections, and naturally so, 
between palintiples and the more general permutiple problem. 
Thus, a more developed understanding of permutiples may very well provide a better understanding of palintiples. 

With the above in mind, we also mention that it is unknown 
if singly-derived or doubly-derived palintiples can be constructed 
from other asymmetric palintiples (neither 1089 nor shifted-symmetric).  
So far none have been found.
Moreover, as mentioned at the end of Section 2, 
it is still unknown if cases of ``triply,'' ``quadruply,'' or similarly derived palintiples exist.
A single case has yet to be found.

Another important unanswered question mentioned by \cite{holt} involves symmetric palintiples. 
It is conjectured in \cite{holt} that a palintiple is symmetric if and only if $n+1$ divides $b$.
Kendrick \cite{kendrick_1} showed that $Y(n,b)$ is isomorphic to $Y(9,10)$ if and only if $n+1$ divides $b$.
Thus, we ask if the following are equivalent for an $(n,b)$-palintiple $p=(d_k, d_{k-1},\ldots, d_0)_b$
with carries $c_k, c_{k-1}, \ldots, c_1, c_0$: 
\begin{enumerate}
\item $p$ is symmetric,
\item $p$ is 1089,
\item $c_j \equiv 0 \mod(n-1)$ for all $0\leq j \leq k$,
\item $n+1$ divides $b$.
\end{enumerate}

If $p$ is 1089, the work of Kendrick \cite{kendrick_1} 
shows that any node of the Young graph has the form $[0,0]$, $[0,n-1]$, $[n-1,0]$, or $[n-1,n-1]$, 
which establishes $(2)\Longrightarrow(3)$.
 $(3)\Longrightarrow(4)$ is easily established by Equation \ref{fund} since
$d_0=\frac{bc_1}{n^2-1}=\frac{b(n-1)}{n^2-1}=\frac{b}{n+1}$.
Theorem 6 in \cite{holt} proves $(4)\Longrightarrow(1)$.
We leave whether or not $(1)\Longrightarrow(2)$ holds as an open question. 
We note that proving this equivalence would both determine all symmetric palintiples
and further characterize all 1089 Young graphs. 

\subsection{Young Graph Isomorphism Classes of Derived Palintiples}

As we have seen, the carries of a palintiple can themselves be the digits of a lower-base palintiple.
If we elevate our perspective to entire palintiple families 
(such as Hoey palintiples), an abundance of questions present themselves:\\

\textit{Is it possible to ``derive'' new Young graphs from old using the old edges as nodes?}\\

\textit{Can we construct entire Young graph isomorphism classes from old?}\\

\textit{How are derived palintiple families related to Young graph isomorphism classes?}\\

Although we will not provide any complete answers to these questions, we will explore some suggestive examples
which, as we shall see, give rise to other questions.

Considering $(3,14)$-palintiples, their nontrivial carries are $(2,3)$-palintiple digits,
and every $(2,3)$-palintiple is the nontrivial carry sequence of some $(3,14)$-palintiple.
In other words, the Young graph describing $(3,14)$-palintiple structure can be ``derived'' from the Young graph
describing $(2,3)$-palintiple structure.
The figure below compares the Young graphs $Y(2,3)$ and $Y(3,14)$, where the ``digit-edges'' 
of the former become the ``carry-nodes'' of the latter.
(We note that our Young graph representation reverses the order of the digit-pairs associated with the edges
since the formulation of palintiples used in this article 
involves finding the number that is obtained \textit{after} multiplying by $n$.)

\includegraphics[width=5.5cm]{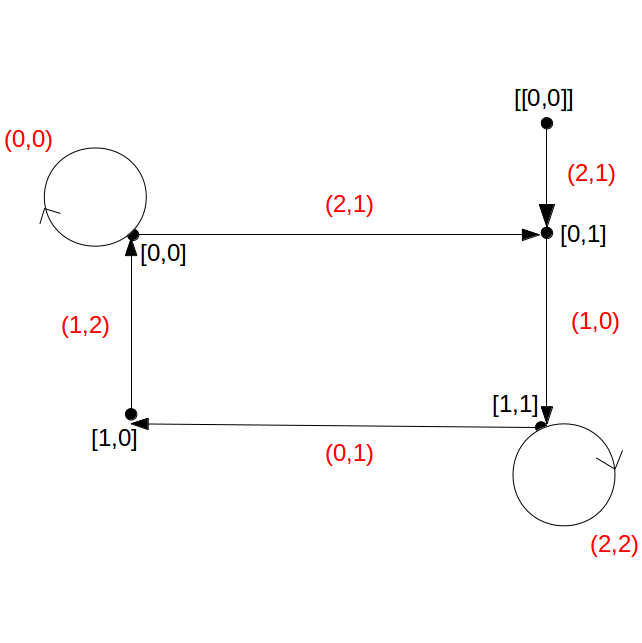}
\includegraphics[width=6.6cm]{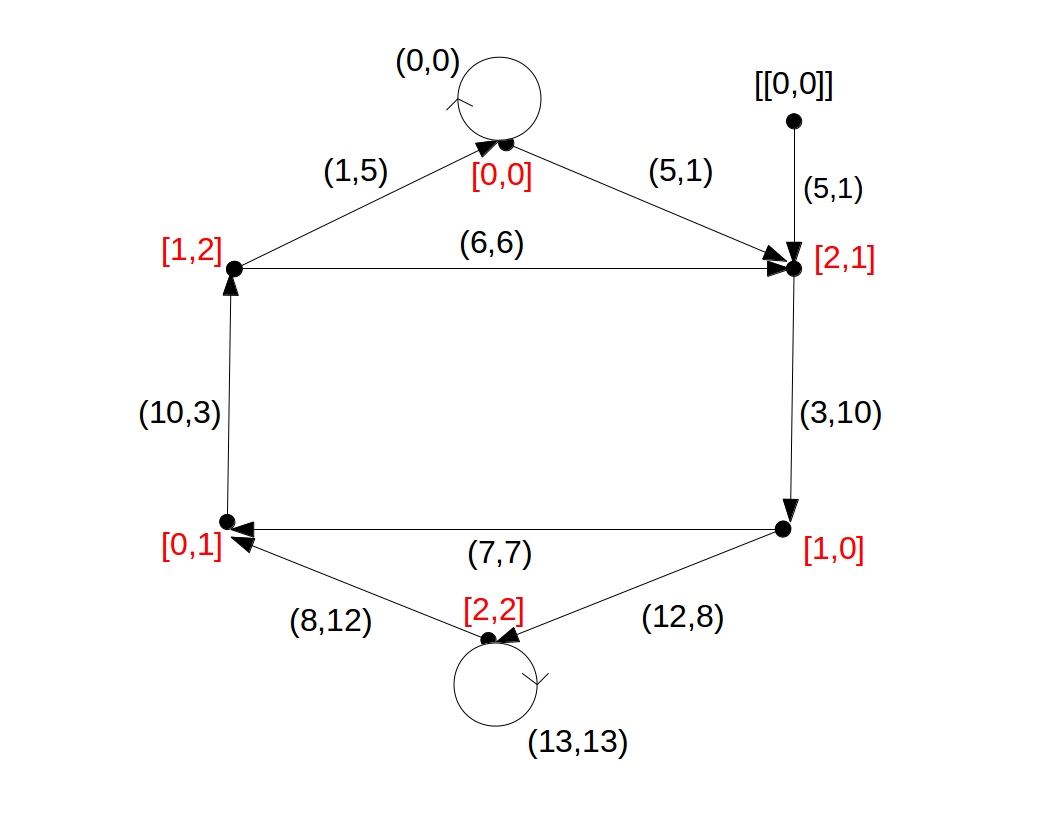}

We point out that the kind of correspondence between $(2,3)$ and $(3,14)$-palintiples does not always exist. 
In particular, an $(\hat{n},\hat{b})$-palintiple 
constructed from an $(n,b)$-palintiple does not always guarantee that the carries of any
$(\hat{n},\hat{b})$-palintiple will also be an $(n,b)$-palintiple. 
For instance, the $(9,107)$-palintiple $(12,40,1)_{107}$ has carries $(3,1,0)$ whose nontrivial elements
are the digits of the $(2,5)$-palintiple $(3,1)_5$ as seen in an earlier example. 
However, the $(9,107)$-palintiple $(24,80,2)_{107}$ has carries $(6,2,0)$  which are not
the digits of a $(2,5)$-palintiple. 

On the other hand, the family of $(5,39)$-palintiples can be constructed from $(2,5)$-palintiples. 
Consider the $(5,39)$-palintiple $(8,29,1)_{39}$ with carries $(3,1,0)$ whose nontrivial elements
are again the digits of the $(2,5)$-palintiple $(3,1)_5$. The nontrivial carries of any $(5,39)$-palintiple
are the digits of a $(2,5)$-palintiple and every $(2,5)$-palintiple is a nontrivial carry sequence of a
$(5,39)$-palintiple. 

This is all to say that, in general, the correspondence between derived palintiples and their  
palintiple carries can break down when $\hat{n} \neq b$. We therefore pose the question:\\

\textit{Suppose an $(\hat{n},\hat{b})$-palintiple can be 
derived from an $(n,b)$-palintiple. Under what conditions is it guaranteed that the carries of any
$(\hat{n},\hat{b})$-palintiple will also be an $(n,b)$-palintiple? Is $\hat{n}=b$ such a condition?}\\

Considering Hoey palintiples, it appears that not only
$Y(3,14)$, but also $Y(3,22)$, and in general $Y(3,6+8\alpha)$
for all $\alpha \geq 1$ (see the example in Section \ref{hoey_example}), can be constructed from $Y(2,3)$.
Moreover, it appears, using Kendrick's data \cite{kendrick_2}, 
that every $Y(3,6+8\alpha)$ is isomorphic to $Y(3,14)$. 
In fact, not surprisingly, for every collection of 1089 $(n,b)$-palintiples we have checked,
the Young graph of its corresponding Hoey $(b,\hat{b})$-palintiples is isomorphic to $Y(3,14)$.
In this way, the isomorphism class determined by the 1089 graph, $[Y(9,10)]$,
in a sense ``generates'' the isomorphism class $[Y(3,14)]$.

We note that not every element of $[Y(3,14)]$ is the Young graph of Hoey palintiples 
as Young graphs of $\rho$-Hoey palintiples also seem to be isomorphic to $Y(3,14)$.   
Furthermore, $[Y(3,14)]$ contains elements which are neither Young graphs of Hoey nor $\rho$-Hoey palintiples.
The $(9,14)$-palintiple $(11, 9, 1, 4, 1)_9$ with carries $(2,1,6,7,0)$ demonstrates this. 
\footnote{Although $(2,1,6,7)_b$ is neither a palintiple nor the reversal of a palintiple 
in any base $b$, the digits do give us two base-9 permutiples: 
$(6, 7, 2, 1)_9=4 \cdot (1, 6, 2, 7)_9$ and $(7, 2, 1, 6)_9=4 \cdot (1, 7, 2, 6)_9$.}
These observations lead us to ask:
\\\\
\textit{Are Young graphs of Hoey and $\rho$-Hoey palintiples always isomorphic to $Y(3,14)$?}\\\\
\textit{Are there any special properties of elements of $[Y(3,14)]$ 
which generate palintiples whose carries are not palintiple digits?}\\

Young graphs of Sutcliffe and $\rho$-Sutcliffe palintiples
derived from shifted-symmetric palintiples whose Young graph is isomorphic to 
$K_2$, $K_3$, and $K_4$, all appear to be isomorphic to $Y(7,11)$. 
Of course, considering larger values of $m$ and checking more cases may very well reveal other isomorphism classes.
We therefore ask the following:
\\\\
\textit{Are Young graphs of Sutcliffe and $\rho$-Sutcliffe palintiples always isomorphic to $Y(7,11)$?}\\

Additionally, for all cases we have checked, Young graphs of Pudwell and $\rho$-Pudwell palintiples 
derived from shifted-symmetric palintiples whose Young graph is isomorphic to 
$K_2$, $K_3$, and $K_4$, all seem to be isomorphic to $Y(5,8)$.
Thus:
\\\\
\textit{Are Young graphs of Pudwell and $\rho$-Pudwell palintiples always isomorphic to $Y(5,8)$?}\\

It is not entirely unexpected that Young graphs of Hoey, Sutcliffe, and Pudwell palintiples should be isomorphic
to Young graphs of their respective $\rho$-derived counterparts. 
On the other hand, it is not entirely obvious that this should always hold. 
In all cases considered so far, it seems to be true.  
\\\\
\textit{Are Young graphs of derived palintiples always isomorphic to their $\rho$-derived counterparts?}\\

Finally, Young graphs of $(\hat{n},\hat{b})$-palintiples derived from $(n,b)$-palintiples for which $\hat{n}\neq b$
leave cases which have hardly yet been explored. We leave the reader to ponder the example of 
$(9,107)$-palintiples considered earlier whose Young graph is isomorphic to $Y(25,59)$. 
These palintiples are in some sense ``partially'' derived from $(2,5)$-palintiples.
We suspect that these nodes might make up a subgraph, $G$, which is isomorphic to $Y(7,11)$.
The reader is likely to have noticed that other carries of $(9,107)$-palintiples are sometimes doubles of $(2,5)$-palintiples.
Thus, $Y(9,107)$ might contain another subgraph, $G'$, which is also isomorphic to $Y(7,11)$, 
but with nodes double those of $G$. Additional structure which may exist between these possible subgraphs is a matter
of further inquiry and we leave these and other such questions to the inquisitive reader.  

% \section*{Acknowledgements}
% 
% The author would like to express his thanks and gratitude to the anonymous referee 
% whose time and patience, as well as invaluable suggestions, substantially elevated the quality of this work. 
% The author also wishes to thank Scott Brewer (BA, University of South Carolina) 
% for a thorough proofreading of this work prior to its final submission.  
% Finally, many thanks to the INTEGERS staff for the time which they also put into this paper.

\end{document}